\newtheorem{theorem}{Theorem}
\newtheorem{prop}[theorem]{Proposition}
\newtheorem{lemma}[theorem]{Lemma}
\theoremstyle{definition}
\theoremstyle{plain}
\newtheorem{remark}[theorem]{Remark}
\theoremstyle{plain}
\newtheorem{fact}[theorem]{Fact}
\newcommand{\p}{\mathbb{P}}
\newcommand{\WW}{\mathbb{W}}
\newcommand{\WWq}{\mathbb{W}^{(q)}}
\newcommand{\WWqprime}{\WW^{(q)'}}
\newcommand{\qscale}{W^{(q)}}
\newcommand{\qscaleprime}{W^{(q) \prime}}
\newcommand{\qscaleY}{\mathbb W^{(q)}}
\newcommand{\qscaleprimeY}{\mathbb{W}^{(q)'}}
\newcommand{\wq}{w^{(q)}}
\newcommand{\wqprime}{w^{(q)'}}
\newcommand{\vq}{V^{(q)}}
\newcommand{\vqprime}{V^{(q)\prime}}
\newcommand{\vqint}{\int_{0}^{\infty} \wq (x;-z) \frac{z}{r} \mathbb{P}(X_{r} \in dz)}
\newcommand{\valueP}{\upsilon^{\kappa^{r}}_{c_{1}^{*},c_{2}^{*}}}
\newcommand{\qplus}{q^{+}(q)}
\newcommand{\qminus}{q^{-}(q)}
\newcommand{\aplus}{A^{+}}
\newcommand{\aminus}{A^{-}}
\newcommand{\qplusY}{q^{+}_{Y}(q)}
\newcommand{\qminusY}{q^{-}_{Y}(q)}
\newcommand{\aplusY}{A^{+}_{Y}}
\newcommand{\aminusY}{A^{-}_{Y}}
\def\beq{\begin{eqnarray}} \def\eeq{\end{eqnarray}}
\def\al*#1{\begin{align*}#1\end{align*}}
\def\ga*#1{\begin{gather*}#1\end{gather*}}
\def\alat*#1#2{\begin{alignat*}{#1}#2\end{alignat*}}
\def\bea{\begin{eqnarray*}}
\def\eea{\end{eqnarray*}}
\def\ml*#1{\begin{multline*}#1\end{multline*}}
\title[Impulse control problem]{Optimality of impulse control problem in refracted L\'evy model with Parisian ruin and transaction costs}
\thanks{I. Czarna is partially supported by the National Science Centre Grant No. 2015/19/D/ST1/01182.\\
A. Kaszubowski is partially supported by the National Science Centre Grant No. 2015/17/B/ST1/01102.}
\date{{\small \today}}
\author[Irmina Czarna]{Irmina Czarna$^{*}$}
\address{$*$ Faculty of Pure and Applied Mathematics, Wroc\l aw University of Science and Technology, ul. Wybrze\.ze Wyspia\'nskiego 27, 50-370 Wroc\l aw, Poland}
\email{irmina.czarna@pwr.edu.pl}
 \author[Adam Kaszubowski]{Adam Kaszubowski$^{\dag}$}
\address{$\dag$ Mathematical Insititute, University of Wroc\l aw, pl. Grunwaldzki 2/4, 50-384 Wroc\l aw, Poland}
\email{adam.kaszubowski@math.uni.wroc.pl}
\begin{document}
\begin{abstract}
In this paper we investigate an optimal dividend problem with transaction costs, where the surplus process is modelled by a refracted L\'evy process and the ruin time is considered with Parisian delay. Presence of the transaction costs implies that one need to consider the impulse control problem as a control strategy in such model. 
An impulse policy $(c_1,c_2)$, which is to reduce the reserves to some fixed level $c_1$ whenever they are above another level $c_2$ is 
an important strategy for the impulse control problem. Therefore, we give sufficient conditions under which the above described impulse policy is optimal.
Further, we give the new analytical formulas for the Parisian refracted $q$-scale functions in the case of the linear Brownian motion and the Cr\'amer-Lundberg process with exponential claims. Using these formulas we show that for these models there exists a unique $(c_1, c_2)$ policy which is optimal for the impulse control problem. Numerical examples are also provided. 
\vspace{3mm}

\noindent {\sc Keywords:} Refracted L\'evy process, Parisian ruin, Dividend problem, Impulse control.

\end{abstract}
\maketitle 
 \vspace{3mm}
\section{Introduction}
For many years applied mathematicians has been trying to create the models that allow to describe reality in the terms of mathematics. A special role is played by models used to describe phenomena that develop over time and in which there is a some factor of randomness. In such case, it is important to approximate some certain characteristics or to find some event probabilities. For example, insurance companies need to estimate the amount of reserves that will allow them to be solvent with a very high probability. In this case, the question arises about the size of these reserves. Another example can be hedging companies, which, when valuing financial instruments, often use stochastic models. 

\hspace{0.6cm} In this paper, we focus on another classic problem affecting the companies, namely on the issue of the optimal dividend payments. Dividend is the transfer of a certain portion of the company's finances to the investors, so it is one of the tools for shareholders to receive the profits from the company's support. Dividends also may attract new investors to the company, and thus provide further financing. On the other hand, significant dividend payments result in the reduction of the company funds, and thus may lead to a significant increase of the probability of losing liquidity. Therefore dividend payments must  be made in an optimal way, and they may be made up to the company's bankruptcy. The problem of the bankruptcy is related to the ruin theory, traditionally considered in the context of the insurance companies, where the ruin moment is related to the process of financial surplus, and in particular with its size at a given moment. The classically defined moment of ruin is the first moment when the surplus process goes below the level zero. Nowadays, such a moment of ruin has a small chance of occurrence. The reason for that is the fact that such companies control (and are controlled) so that the probability of such event stays at a very low level, whether through the impact of the additional cash or prior fixing of the financial reserves. Probability of the ruin as well as the theory of ruin plays therefore a different role. It is a determinant of the financial situation of the company, which allows to make strategic decisions in the company management. Therefore, it is important to investigate different definitions of the ruin and choose the proper one for our case. 

\hspace{0.6cm} The classical ruin seems to be an intuitively obvious definition and if such a moment comes, we expect that the company will immediately declare bankruptcy. In practice, very often the investors or the government try to save the company from bankruptcy. Additionally, the too restrictive definition of the ruin as an economic determinant causes freezing of too much cash securing, and so the company grows weaker. Analysing this problem in the terms of the above doubts, it can be concluded that there is a natural need for a different definition of the ruin and in particular the separation of the technical ruin, i.e. exceeding the zero level, and the actual moment of the bankruptcy announcement. For this reason, many alternatives appeared in the literature, for example the so-called Parisian ruin model, which is considered in this paper.
In this approach we say that the company announces bankruptcy if the risk process to goes below zero (or to the so-called \textit{red zone}) and stays there longer than a certain fixed time $r> 0$. Such \textit{Parisian} stopping times have been studied by Chesney et al. \cite{CJPY97} in the context of barrier options in mathematical finance. In another paper Czarna and Palmowski \cite{CP11} gave the first description of the Parisian ruin probability for a general spectrally negative L\'evy processes.

\hspace{0.6cm} Now, let us define a class of the processes that are usually used to model the financial surplus. 
One of the most known stochastic processes used in the theory of ruin is the Cr\'amer-Lundberg process, which can be presented in the following form
\[
X_t = x + pt - \sum_{i=1}^{N_t} U_i,
\]
where $x \geq 0$ represents the initial capital, $p> 0$ is the constant intensity of the premium income, $\lbrace N_t \rbrace_{t\geq 0}$ is a homogeneous Poisson process with the intensity $\lambda > 0$ and $\lbrace U_i \rbrace_{i=1}^{\infty}$ are positive i.i.d random variables. Therefore, the compound Poisson the claims of the customers. 
The form of the Cr\'amer-Lundberg process has some benefits in the aspect of ease of calculations, however, sometimes it may turn out to be too far-reaching simplification. For example, one can see that between successive claims this process is deterministic, so it does not take into account certain market fluctuations. In addition, in the form of this process, we are not able to distinguish large and small claims, and what is done in practice for insurance companies. Therefore, one can consider a wider class of spectrally negative L\'evy processes, which contain the Cr\'amer-Lundberg process. This class of the processes include also linear Brownian motion, Cauchy process and $\alpha$-stable processes. 

\hspace{0.6cm} As was mentioned before, one would like to distinguish the moment of exceeding the zero level with the actual bankruptcy by considering the Parisian ruin time. However, to further approximate the model to the reality, we will add an additional assumption. Namely, when the surplus process is in the \textit{red zone} (i.e. below zero) we assume that it receive a steady flow of cash with the intensity of $\delta> 0$, until it will reach the positive values. It reflects saving the company from bankruptcy by investors or government. In order for such assumption to be added, one need to use the so-called spectrally negative \textit{refracted} L\'evy process, which was introduced by Kyprianou and Loeffen \cite{KL10}. 

\hspace{0.6cm} Therefore, using the above mentioned assumptions, our goal is to analyse the problem of the optimal dividend payments, where each payment is be accompanied by a certain fixed transaction fee $\beta > 0$. 

\hspace{0.6cm} Historically, many papers have been written on this topic. The first problem was examined by de Finetti in \cite{F57}. He postulated that if the risk process behaves like a random walk with the increments of $\pm$ 1, then the optimal dividend strategy is of barrier type. The barrier strategy is that the company pays everything above a certain fixed level $b$. The next step was to consider the continuous-type processes. In the framework of the linear Brownian motion process as well as the Cramer-Lundberg process, a similar result was obtained, i.e. the optimal strategy is the barrier strategy, see \cite{GS04}, \cite{GS06} and \cite{JS95}. Finally, in \cite{APP07}, the optimal barrier strategy for the entire class of spectrally negative L\'evy processes was examined. The authors received certain conditions that would ensure that this strategy is the optimal one. Moreover, they expressed these conditions in the language of the so-called scale functions, which will be introduced in Subsection \ref{Exit problems and scale functions}. Note that in the all above-mentioned papers there were no transaction cost. The only exception is \cite{L08}, where this assumption was made, also for the class of the spectrally negative L\'evy processes. Due to this assumption, further consideration of the barrier strategy was not possible, thus it was replaced by an impulse control strategy, which will be described in details in the following chapters. Moreover, in Section \ref{optimality}, the sufficient conditions were obtained providing the optimality of this dividend strategy.

\hspace{0.6cm} The rest of the paper is organized as follows. First, we introduce some basic notation and definitions related to the spectrally negative L\'evy processes and the refracted counterpart.
In particular, we will introduce the scale functions and explain why in this theory they play the key role. In Subsection \ref{Dividend} we will describe the dividend problem and will explain what means that the dividend strategy is the optimal strategy. Section \ref{Impulse strategy with the Parisian ruin} is the main part of this paper. We will introduce there the impulse $(c_1,c_2)$ policy and we will provide sufficient conditions that the derivative of Parisian refracted scale must fulfils to ensure that the strategy is optimal. The last part of this paper is an examples section, where we will give the new analytical formulas for the Parisian refracted scale functions in the case of the linear Brownian motion and the Cr\'amer-Lundberg process with exponential claims. Using these formulas we will show that for these models there exists a unique impulse policy which is optimal for the impulse control problem. Numerical examples will be also provided. 

\section{Mathematical model}
\subsection{Surplus process}
Let  $(\Omega,\mathcal{F},\mathbb{F} = \lbrace \mathcal{F}_{t}: t \geq 0 \rbrace$, $\mathbb{P})$ be the probability space which satisfy usual conditions. On this probability space we consider process $X = \lbrace X_{t}\rbrace_{t\geq 0}$ being a spectrally negative L\'evy process, namely the stochastic process issued from the origin which has stationary and independent increments and c\`adl\`ag paths that have no positive jump discontinuities. To avoid degenerate cases, we exclude the case where $X$ has monotone paths. As a strong Markov process we shall endow $X$ with probabilities $\{\mathbb P_x:x\in\mathbb R\}$ such that under $\mathbb P_x$ we have $X_0=x$ with probability one. Further $\mathbb E_x$ denotes expectation with respect to $\mathbb P_x$. Recall that $\mathbb P=\mathbb P_0$ and $\mathbb E=\mathbb E_0$. 
 Every spectrally negative L\'evy process can be represented by the triple $(\gamma,\sigma,\Pi)$ where  $\gamma \in \mathbb{R}$,  $\sigma \geq 0 $ and $\Pi$ is a measure on $(-\infty,0)$ which satisfies
\[
\int_{(-\infty,0)} (1\wedge x^2) \Pi(dx) < \infty.
\]
The Laplace exponent of $X$ is defined through
\[
\psi(\theta) := \log(\mathbb{E}[e^{\theta X_1}]) = \gamma \theta + \frac{1}{2} \sigma^2 \theta^2 + \int_{(-\infty,0)} \Bigl( e^{\theta x} -1 - \theta x \textbf{1}_{\lbrace -1 < x < 0\rbrace}\Bigr)\Pi(dx) 
\]
 for any $\theta \geq 0 $. For background on spectrally negative L\'evy processes we refer the reader to \cite{B96,K06}. 

We assume that in our model the surplus process $R$ is modelled by spectrally negative refracted L\'evy process which means that we allow injecting (in continuous way) certain amount of money with intensity $\delta>0$ when reserves are below zero. 
Namely, one can define such process as unique strong solution $R = \lbrace R_{t}\rbrace_{t \geq 0} $ to the following stochastic differential equation: 
\begin{equation}
\nonumber
dR_{t} = dX_{t} - \delta\mathbf{1}_{\lbrace R_{t} >b \rbrace} dt, \quad \text{for }\delta > 0 \quad \text{and } \quad b=0 .
\end{equation}
Note that, refracted process $R$ with $b \geq 0$ was examined before by Kyprianou and Loeffen in \cite{KL10}. As in \cite{LCR17} we focus here on the case when the refraction level $b$ equals zero. Moreover, to be compatible with \cite{KL10} and \cite{LCR17}, we subtract $\delta$ on the positive half-line instead of adding it on the negative half-line, however, the practical effect is the same. 

From the above equation it is easy to observe that above the level $b$ process $R$ evolves as process $Y_t = X_t - \delta t$. 
Since the process $Y$ is a spectrally negative L\'evy process with the L\'evy triplet $(\gamma - \delta, \sigma, \Pi)$ its Laplace exponent is given by 
\[
\psi_{Y}(\theta) = \psi(\theta) - \delta \theta,
\]
In particular, process $Y$ retains the probabilistic properties of the process $X$, e.g. the bounded/unbounded variation of the paths. 
Moreover, we want to emphasize here that process $R$ is no longer spatial homogeneous which means that it is not a L\'evy process. In Section \ref{optimality} we will prove that process $R$ is a Feller process and we will present the form of its infinitesimal generator.

\subsection{Dividend problem}\label{Dividend}
Let us now formally introduce the problem studied in this paper, in particular we define the optimization criterion, and then define the candidate for the optimal strategy.
Denote $\pi$ as a dividend or control strategy, where $\pi = \lbrace L_{t}^{\pi}\rbrace_{t \geq 0 }$ is a non-decreasing, left-continuous $\mathbb{F}$-adapted process which starts at zero. We will assume that process $L^{\pi}$ is a pure jump process, i.e.
\begin{equation}\label{pureJump}
L_{t}^{\pi} = \sum_{0 \leq s < t} \Delta L_{s}^{\pi}, \quad \text{for all } t \geq 0.
\end{equation}
Here we mean by $\Delta L_{s}^{\pi} = L_{s+}^{\pi} - L_{s}^{\pi}$ the jump of the process $L^{\pi}$ at time s.
Therefore, random variable $L_{t}^{\pi}$ can be interpreted as a cumulated dividends to the time $t$. 
Note that, pure jump assumption is taken directly from the presence of non-zero transaction costs and such control strategies as \eqref{pureJump} are known as \textit{impulse controls}. 
Let us define the controlled risk process $U^{\pi} = \lbrace{ U_{t}^{\pi}\rbrace}_{ t \geq 0}$ by the dividend strategy~$\pi$: $$U_{t}^{\pi} := R_{t} - L_{t}^{\pi}.$$
The company pays dividends up to its bankruptcy moment which in our model is the Parisian ruin time. Let us formally define it as 

$$\kappa^{r} = \inf \lbrace t > 0: t - \sup\lbrace s < t : U^{\pi}_{s} \geq 0  \rbrace \geq r, U^{\pi}_{t} < 0 \rbrace,$$

where $r > 0$ is the so-called Parisian delay.  

Let us define the value function of a dividend strategy $\pi$:
\begin{equation}
\nonumber
\upsilon_{\pi}^{\kappa^{r}}(x) = \mathbb{E}_{x}\left[ \int_{0}^{\kappa^{r}}e^{-qt} d \Bigl(L_{t}^{\pi} - \sum_{0\leq s < t} \beta \mathbf{1}_{\lbrace \Delta L_{s}^{\pi} > 0 \rbrace} \Bigr)\right], \quad \text{ for } x \geq 0,
\end{equation}
where $q > 0$ is the discount rate and $\beta > 0$ denotes the transaction cost which occurs whenever the company pays dividends. Since (\ref{pureJump}) is assumed, the above integral can be interpreted as the following sum 
\begin{equation}
\nonumber
\upsilon^{\kappa^{r}}_{\pi}(x) = \mathbb{E}_{x}\left[\sum_{0 \leq t < \kappa^{r}} e^{-qt}\Bigl(\Delta L_{t}^{\pi} - \beta \mathbf{1}_{\lbrace \Delta L_{t}^{\pi} > 0 \rbrace}\Bigr)\right], \quad x \geq 0. 
\end{equation}
We call a strategy $\pi$ admissible if we do not get to the {\it red zone} due to dividend payments,~i.e. 
\begin{equation}\label{red zone}
U_t^{\pi} - \Delta L_t^{\pi} \geq 0, \quad \textrm{ for } \quad t < \kappa^r.
\end{equation} 
 Let $\mathcal{A}$ be the set of all admissible dividend strategies. Our main goal is to find the optimal value function $\upsilon_{*}$ given by
\begin{equation}
\nonumber
\upsilon_{*}(x) = \sup_{\pi \in \mathcal{A}} \upsilon^{\kappa^{r}}_{\pi}(x)
\end{equation}
and the optimal strategy $\pi_{*} \in \mathcal{A}$ such that
\begin{equation}
\nonumber
\upsilon_{\pi_{*}}^{\kappa^{r}}(x) = \upsilon_{*}(x), \quad \text{for all } x \geq 0. 
\end{equation}

\subsection{Exit problems and scale functions}\label{Exit problems and scale functions}
In this section we introduce key tools that will allow the optimality of dividend strategy to be investigated.
From the application point of view, one of the most important issues studied in the theory of L\'evy processes are so-called exit problems. The classical de Finetti dividend problem can also be expressed using exit identities, therefore we will recall here basic results from this topic. 

First, for a $\in \mathbb{R}$, we define the following first-passage stopping times
\begin{equation}
\nonumber
\begin{split}
&\tau_a^{-} = \inf\lbrace t > 0: X_t < a\rbrace \quad \text{and} \quad \tau_{a}^{+} = \inf\lbrace t > 0: X_t \geq a \rbrace, \\
& \nu_{a}^{-} = \inf\lbrace t > 0 : Y_t < a \rbrace \quad \text{and} \quad \nu_{a}^{+} = \inf\lbrace t > 0: Y_t \geq a\rbrace, \\
& \kappa_{a}^{-} = \inf \lbrace t > 0: R_t < a \rbrace \quad \text{and} \quad \kappa_a^{+} = \inf\lbrace t > 0: R_t \geq a \rbrace .
\end{split}
\end{equation}
One can be interested in obtaining an analytical representation of the following expression (the so-called two-sided exit problem) 
\begin{equation}
\nonumber
\mathbb{E}_{x}\Bigl[e^{-q \tau_{c}^{+}}\mathbf{1}_{\lbrace \tau_{c}^{+} < \tau_{0}^{-}\rbrace} \Bigr].
\end{equation}
Namely, we would like to examine a unit payment made when the level $c$ is reached before the first moment when the level zero is exceeded. This payment is additionally discounted by a discount factor  $q>0$. 
To obtain the analytical expression for the above expectation let us define the following function.

For each $q \geq 0$ there exists a function $W^{(q)}:\mathbb{R} \rightarrow [0,\infty)$, called the $q$-scale function, which satisfies $W^{(q)}(x) = 0$ for $x<0$ and is characterised on $[0,\infty)$ as a strictly increasing and continuous function whose Laplace transform is given by 
\begin{equation}
\nonumber
\int_{0}^{\infty} e^{-\theta x}W^{(q)}(x) dx = \frac{1}{\psi(\theta)-q}, \quad \text{for } \theta > \Phi(q),
\end{equation}
where $\Phi(q) = \sup\lbrace \theta \geq 0: \psi(\theta) = q\rbrace$ is the right-inverse of $\psi$. We define the second scale function by
\begin{equation}
\nonumber
Z^{(q)}(x) := 1 + q\int_{0}^{x} W^{(q)}(y)dy, 
\quad x \in \mathbb{R}.
\end{equation}
It turns out that for $-\infty < a \leq x \leq c <\infty$ and $q \geq 0$ (see e.g., \cite{K06})
\begin{equation}
\nonumber
\mathbb{E}_{x}\Bigl[e^{-q \tau_{c}^{+}}\mathbf{1}_{\lbrace \tau_{c}^{+} < \tau_{a}^{-}\rbrace} \Bigr] = \frac{W^{(q)}(x-a)}{W^{(q)}(c-a)} 
\end{equation}
and also for $q>0$
\[
\mathbb{E}_x\left[e^{-q\tau_a^{-}}\textbf{1}_{\lbrace \tau_a^{-} < \tau_c^{+}\rbrace}\right] = Z^{(q)}(x-a) - \frac{Z^{(q)}(c-a)}{\qscale (c-a)}\qscale (x-a).
\]
Analogously we can define the scale functions for L\'evy process $Y$, and we will use notation $\qscaleY$ and $\mathbb{Z}^{(q)}$ for the first and second scale functions for $Y$, respectively. 
Define the scale function for refracted process $R$ as follows: For $q \geq 0$ and $x,a \in \mathbb{R}$
\begin{equation}\label{small w}
w^{(q)}(x;a) := \begin{cases} W^{(q)}(x-a), \quad \text{ for }x < 0 \\ W^{(q)}(x-a)  + \delta\int_{0}^{x} \mathbb{W}^{(q)}(x-y) W^{(q)'}(y-a)dy,  \quad \text{ for }x \geq 0.
\end{cases}
\end{equation}
In particular, we write $w^{(q)}(\cdot;0)=w^{(q)}(\cdot)$ when $a=0$. 
One can see that the above definition differs from the definition of scale functions for $X$ and $Y$. However, in \cite{KL10} it was proved that for $-\infty < a \leq x \leq c < \infty$ and $q \geq 0$
\[
\mathbb{E}_x\left[e^{-q \kappa_c^{+}}\textbf{1}_{\lbrace \kappa_c^{+} < \kappa_{a}^{-}\rbrace}\right] = \frac{\wq(x;a)}{\wq(c;a)}.
\]
Therefore, one can see that for process $R$, function $\wq$ gives the same representation for the two-sided exit problem as scale functions $W^{(q)}$ and $\qscaleY$. 

In this paper we additionally consider Parisian ruin time, and from \cite{LCR17} it is known that 
\begin{equation}\label{Two sided exit problem Parisian}
\mathbb{E}_{x}\Bigl[e^{-q \kappa_{a}^{+}}\mathbf{1}_{\lbrace \kappa_{a}^{+} < \kappa^{r}\rbrace}\Bigr] = \frac{\vq(x)}{\vq(a)},
\end{equation}
where \begin{equation}
\nonumber
V^{(q)} (x) = \int_{0}^{\infty}w^{(q)}(x;-z)\frac{z}{r}\mathbb{P}(X_{r} \in dz).
\end{equation}
Since scale functions occur in many fluctuation identities, the natural question is if it is possible to calculate them explicitly. 
The answer is that for some particular examples like Brownian motion with drift or Cram\'er-Lundberg process with exponential jumps, the form of functions $W^{(q)}$, $w^{(0)}$, $V^{(0)}$ can be obtained explicitly (see \cite{B96,HK11,KKR12,K06,LCR17}).  
\subsection{Properties of scale functions}

In this part, we will investigate properties of the scale functions, which will be crucial for further proofs in this paper.

At the beginning, let us cover the behaviour of the scale functions at zero. Recall that $(\gamma,\sigma,\Pi)$ is a L\'evy triple of the process $X$ and set $p := \gamma + \int_{0}^{1}x\Pi(dx)$ when process $X$ is of bounded variation $($this quantity then represents drift of the process$)$. Then 
\begin{equation}\label{Scale function at 0}
W^{(q)}(0+) = \begin{cases} \frac{1}{p} & \quad \text{ when X has bounded variation paths} \\ 0 & \quad \text{ otherwise.}
\end{cases}
\end{equation}
From \eqref{small w} one can see that the initial value of $\wq$ equals $W^{(q)}(-a)$. 
Whereas, for $\vq$, one can find in \cite{LCR17, LCP13} that
\begin{equation}\label{int wq and Xr in dz}
V^{(q)} (0) = \int_{0}^{\infty} \qscale (z) \frac{z}{r} \mathbb{P}(X_{r} \in dz) = e^{qr}. \end{equation}
The initial value of $W^{(q)'}$ equals (see e.g., \cite{K06})
\[
\qscaleprime(0+)= \lim_{x \rightarrow 0^{+}} \qscaleprime(x) = \begin{cases} \frac{2}{\sigma^2} &\text{when } \sigma > 0 \\ \frac{\Pi(0,\infty) + q}{p^2} & \text{when } \sigma = 0 \text{ and } \Pi(0,\infty) < \infty \\ \infty & \text{otherwise.}\end{cases}
\]

Moreover, for $w^{(q)}$ the following proposition was proved in \cite{CPRY19}. 

\begin{prop}\label{Derivative of w exist}
In general,  $w^{(q)}(\cdot;a)$ is a.e.~continuously differentiable and its derivative is of the form
\begin{equation*}
w^{(q)'}(x;a) =\begin{cases} W^{(q)'}(x-a), \quad \text{ for }x < 0 \\(1+ \delta \mathbb{W}^{(q)}(0))W^{(q)'}(x-a)+ \delta\int_{0}^{x}\mathbb{W}^{(q)'}(x-y)W^{(q)'}(y-a)dy ,  \text{ for } x \geq 0.
\end{cases}
\end{equation*}
In particular, if $X$ is of unbounded variation, then $w^{(q)}(\cdot;a)$ is $C^1(a,\infty)$. 
 On the other hand, if we assume that $W^{(q)}(\cdot-a) \in C^1(a,\infty)$ for $X$ is of bounded variation, 
then $w^{(q)}(\cdot;a)$ is also $C^1((a,\infty) \backslash \{ 0\})$. 
\end{prop}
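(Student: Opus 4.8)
The plan is to establish the formula for $w^{(q)'}(x;a)$ by differentiating the defining expression \eqref{small w} piece by piece, and then to promote the a.e.-differentiability to genuine $C^1$-smoothness by analysing the continuity of each term. The key structural fact I would exploit is that for $x \geq 0$ the function $w^{(q)}(x;a)$ is built from $W^{(q)}(x-a)$ together with a convolution term $\delta \int_0^x \WWq(x-y)\qscaleprime(y-a)\,dy$; the whole difficulty is concentrated in differentiating this convolution and in controlling its boundary behaviour as $x \downarrow 0$.

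First I would treat the region $x<0$, where $w^{(q)}(x;a)=W^{(q)}(x-a)$ and the claimed derivative $W^{(q)'}(x-a)$ follows immediately from the known (a.e.) differentiability of the scale function $W^{(q)}$. Next, for $x>0$, I would differentiate the convolution
\begin{equation*}
g(x):=\delta\int_{0}^{x}\WWq(x-y)\qscaleprime(y-a)\,dy.
\end{equation*}
Writing this as a convolution of $\WWq$ with $\qscaleprime(\cdot-a)$, Leibniz' rule (in the distributional / a.e. sense, since $\WWq$ is only a.e.\ differentiable in the bounded-variation case) gives a boundary term $\delta\WWq(0)\,\qscaleprime(x-a)$ coming from the upper limit $y=x$, plus the interior term $\delta\int_0^x \WWq{}'(x-y)\qscaleprime(y-a)\,dy$. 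Adding the contribution $W^{(q)'}(x-a)$ from the leading scale-function term assembles exactly the stated expression $(1+\delta\WWq(0))W^{(q)'}(x-a)+\delta\int_0^x\WWq{}'(x-y)W^{(q)'}(y-a)\,dy$. Care is needed because $\WWq$ may itself fail to be differentiable on a null set, so the identity is first obtained almost everywhere, which is all that is claimed in the general statement.

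For the $C^1$ refinements I would argue from the explicit formula just derived. If $X$ is of \emph{unbounded} variation, then $\sigma>0$ or the jump activity forces $W^{(q)}(0+)=0$ and $W^{(q)}\in C^1(0,\infty)$, so $\qscaleprime$ is continuous; the interior convolution $\int_0^x \WWq{}'(x-y)W^{(q)'}(y-a)\,dy$ is then continuous in $x$ as a convolution of locally integrable functions, and since $\WWq(0)=0$ in this case the boundary term vanishes and the two one-sided limits match at $x=0$, giving $w^{(q)}(\cdot;a)\in C^1(a,\infty)$. In the \emph{bounded} variation case one assumes $W^{(q)}(\cdot-a)\in C^1(a,\infty)$ outright; then each constituent of $w^{(q)'}$ is continuous on $(a,\infty)\setminus\{0\}$, but the jump of $W^{(q)}$ at its own origin (the nonzero value $1/p$) prevents matching the one-sided limits at the refraction point $x=0$, which is why the claimed smoothness excludes $x=0$.

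The \textbf{main obstacle} I anticipate is the rigorous justification of differentiating the convolution $g$ when $\WWq$ is merely of bounded variation and only a.e.\ differentiable: one cannot naively apply the Leibniz rule, and the boundary term $\delta\WWq(0)\qscaleprime(x-a)$ must be extracted carefully (for instance by writing $g(x)=\delta\int_0^x \WWq(u)\,\qscaleprime(x-u-a)\,du$ after the substitution $u=x-y$, differentiating under the integral, and identifying the endpoint contribution). Handling the interplay between the two possible singular points — the origin of $W^{(q)}$ and the refraction level $0$ — is exactly what separates the unbounded-variation case (full $C^1$) from the bounded-variation case ($C^1$ away from $0$).
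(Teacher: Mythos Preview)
The paper does not actually prove this proposition: it is quoted verbatim from \cite{CPRY19} (see the sentence ``Moreover, for $w^{(q)}$ the following proposition was proved in \cite{CPRY19}'' immediately preceding the statement), so there is no in-paper argument to compare against.

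Your plan is the natural direct computation and is essentially correct. Differentiating the convolution $g(x)=\delta\int_0^x\WWq(x-y)\qscaleprime(y-a)\,dy$ via the substitution $u=x-y$ (so that the $x$-dependence sits only in the upper limit and in $\qscaleprime$) cleanly produces the boundary contribution $\delta\WWq(0)\qscaleprime(x-a)$ together with the interior integral, exactly as you describe; this matches the stated formula. Your identification of the obstacle is also accurate: the only genuine technical point is legitimising the differentiation under the integral when $\WWq$ is merely absolutely continuous (bounded-variation case), and this is handled by the change of variables you propose. For the $C^1$ statements, your continuity-matching argument at $x=0$ is right---the mismatch $\delta\WWq(0)\qscaleprime(-a)$ vanishes precisely in the unbounded-variation case because $\WWq(0)=0$ there. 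One minor caution: in the unbounded-variation case with $\sigma=0$ the derivative $\WWq{}'(0+)$ may blow up, so the claim that the convolution $\int_0^x\WWq{}'(x-y)\qscaleprime(y-a)\,dy$ is continuous in $x$ requires a short local-integrability estimate rather than just ``convolution of locally integrable functions''; this is routine but should be made explicit in a full proof.
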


\section{Impulse strategy with the Parisian ruin}\label{Impulse strategy with the Parisian ruin}
Let us present the candidate to be an optimal strategy for the dividend problem described in Section \ref{Dividend}. 
Formally, define the so-called impulse strategy $\pi_{c_{1},c_{2}}$. 
Set two constants $c_1$ and $c_2$ such that $c_{2} > c_{1} + \beta$ and $c_{1} \geq 0$. 
Next, fix $\lbrace \tau_{k}^{c_{1},c_{2}} , k = 1,2,..\rbrace$ as a set of the stopping times, such that:
\begin{equation}
\nonumber
\tau_{k}^{c_{1},c_{2}} = \inf\lbrace t > 0 : R_{t} > [(R_{0} \vee c_{2}) + (c_{2} - c_{1})(k-1)]\rbrace , \quad k = 1,2,...
\end{equation}
The strategy $\pi_{c_{1},c_{2}} = \lbrace L_{t}^{c_{1},c_{2}} : t\geq 0\rbrace$ is defined as
\begin{equation}
\nonumber
L_{t}^{c_{1},c_{2}} = \mathbf{1}_{\lbrace \tau_{1}^{c_{1},c_{2}} < t \rbrace}([R_{0} \vee c_{2}]-c_{1}) + \sum_{k=2}^{\infty} \mathbf{1}_{\lbrace \tau_{k}^{c_{1},c_{2}}<t \rbrace}(c_2-c_1), \quad t\geq 0. 
\end{equation}
Then, the controlled risk process is of the form $U_{t}^{c_{1},c_{2}} = R_{t} - L_{t}^{c_{1},c_{2}}$. Note that in the terms of $U_{t}^{c_{1},c_{2}}$ one can write
$\tau_{1}^{c_{1},c_{2}} = \inf\lbrace t >0 : U_{t}^{c_{1},c_{2}} > c_{2}\rbrace$ and  $\tau_{k}^{c_{1},c_{2}} = \inf \lbrace t > \tau_{k-1}^{c_{1},c_{2}} : U_{t}^{c_{1},c_{2}} > c_{2}\rbrace$ for $k \geq 1$. Therefore, the impulse strategy is to reduce the risk process to $c_1$ whenever the process exceeds level $c_2$. It is assumed that the distance between $c_1$ and $c_2$ must be greater than $\beta$, because after paying the transaction costs there must be something left for shareholders. Additionally, the condition that $c_{1} \geq 0$ is a consequence of \eqref{red zone}. \\

Before we give necessary conditions for $(c_1,c_2)$ strategy to be optimal, we need to consider the form of the value function as a helping tool for the further investigations. 
\subsection{Representation of the value function}
\begin{prop}\label{first_representation}
The function $v_{c_{1},c_{2}}^{\kappa^{r}}$ for the strategy $\pi_{c_{1},c_{2}}$ with the ruin time $\kappa^{r}$ is of the form:
\begin{equation}\label{v_first}
v_{c_{1},c_{2}}^{\kappa^{r}}(x) = \begin{cases} (c_{2}-c_{1}-\beta)\frac{V^{(q)} (x)}{V^{(q)} (c_{2}) - V^{(q)} (c_{1})}, & \text{for } x \leq c_{2} 
\\ x-c_{1}-\beta + (c_{2}-c_{1}-\beta)\frac{V^{(q)} (c_{1})}{V^{(q)} (c_{2}) - V^{(q)} (c_{1})}, & \text{for } x > c_{2}. \end{cases}
\end{equation}
\end{prop}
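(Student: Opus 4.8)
The plan is to exploit the regenerative structure of the impulse strategy $\pi_{c_1,c_2}$ together with the strong Markov property of $R$ and the Parisian two-sided exit identity \eqref{Two sided exit problem Parisian}. Since $X$ (and hence $R$) has no positive jumps, the controlled process $U^{c_1,c_2}$ reaches the level $c_2$ continuously, so at each dividend epoch it sits exactly at $c_2$ and is instantaneously pushed down to $c_1$. Because $c_1 \geq 0$, every reduction leaves the process in the green zone, so dividend payments never interact with the excursions below zero that define $\kappa^r$; consequently the pieces of $U^{c_1,c_2}$ between successive dividend times are independent refracted excursions started afresh at $c_1$. I would treat the two regions $x \le c_2$ and $x > c_2$ separately.

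For $x \le c_2$, the first dividend is paid at the epoch $\kappa_{c_2}^{+}$, provided Parisian ruin has not occurred first. At that instant the net cash flow to shareholders is $c_2 - c_1 - \beta$ and the process restarts at $c_1$; the strong Markov property then gives
\begin{equation}
\nonumber
v_{c_1,c_2}^{\kappa^r}(x) = \mathbb{E}_x\Bigl[e^{-q\kappa_{c_2}^{+}}\mathbf{1}_{\{\kappa_{c_2}^{+} < \kappa^r\}}\Bigr]\bigl((c_2-c_1-\beta) + v_{c_1,c_2}^{\kappa^r}(c_1)\bigr).
\end{equation}
Substituting \eqref{Two sided exit problem Parisian} with $a = c_2$ turns the prefactor into $\vq(x)/\vq(c_2)$, leaving a single unknown, namely $v_{c_1,c_2}^{\kappa^r}(c_1)$.

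I would then determine this constant by setting $x = c_1$ in the displayed identity, which produces a linear equation whose solution is $v_{c_1,c_2}^{\kappa^r}(c_1) = (c_2-c_1-\beta)\vq(c_1)/(\vq(c_2)-\vq(c_1))$; here the denominator is strictly positive because $\vq$ is strictly increasing and $c_2 > c_1$. Plugging this back and simplifying the bracketed term to $(c_2-c_1-\beta)\vq(c_2)/(\vq(c_2)-\vq(c_1))$ recovers the first line of \eqref{v_first}. For $x > c_2$ the strategy pays the lump $x - c_1$ immediately (net $x - c_1 - \beta$) and regenerates at $c_1$, so $v_{c_1,c_2}^{\kappa^r}(x) = x - c_1 - \beta + v_{c_1,c_2}^{\kappa^r}(c_1)$, and inserting the value of $v_{c_1,c_2}^{\kappa^r}(c_1)$ just found gives the second line.

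The main obstacle is the rigorous justification of the regeneration step: one must verify that the infinite sum defining the value function converges and may be reorganized as a geometric-type series of discounted renewal blocks, and, more delicately, that the Parisian clock genuinely resets at each dividend epoch. This last point relies on $c_1 \ge 0$ and on the observation that the excursions of $U^{c_1,c_2}$ below zero coincide with those of an uncontrolled refracted process between consecutive passages above $c_2$; once this is in place, the strong Markov property at $\kappa_{c_2}^{+}$ applies cleanly and the renewal identity above is exact rather than merely formal.
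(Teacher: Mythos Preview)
Your proof is correct and follows essentially the same route as the paper: apply the strong Markov property at the first upcrossing of $c_2$ together with the Parisian exit identity \eqref{Two sided exit problem Parisian} to obtain a linear renewal equation for the value function, solve it, and treat $x>c_2$ via the immediate lump-sum reduction to $c_1$. The only cosmetic difference is that you solve directly for $v_{c_1,c_2}^{\kappa^r}(c_1)$ whereas the paper first isolates $v_{c_1,c_2}^{\kappa^r}(c_2)$; your extra remarks on the Parisian clock resetting (thanks to $c_1\ge 0$) make explicit a point the paper leaves implicit.
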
 
\begin{proof}
At the beginning of the proof note that it is sufficient to prove this Proposition only for $x \leq c_{2}$, because $U^{c_{1},c_{2}}$ is a Markov process and if we are above level $c_{2}$ we put the process into level $c_{1}$ immediately. Also recall the discussions that precede equality (\ref{Two sided exit problem Parisian}). \\
Assume that $x \leq c_{2}$. The first time when we paid dividends is $\tau_{1}^{c_{1},c_{2}}$ that means that we must wait until the first time when process $U^{c_{1},c_{2}}$ is greater that $c_{2}$. Using strong Markov property we have that:
\begin{equation}\label{star}
\upsilon_{c_{1},c_{2}}^{\kappa^{r}}(x) = \mathbb{E}_{x}\Bigl[e^{-q \kappa_{c_{2}}^{+}}\mathbf{1}_{\lbrace \kappa_{c_{2}^{+}}<\kappa^{r} \rbrace} \Bigr]\upsilon_{c_{1},c_{2}}^{\kappa^{r}}(c_{2}) = 
\frac{V^{(q)}(x)}{V^{(q)}(c_{2})}\upsilon_{c_{1},c_{2}}^{\kappa^{r}}(c_{2}), 
\end{equation}
where last equality follow from (\ref{Two sided exit problem Parisian}). If we are at point $c_{2}$ we paid $c_{2}-c_{1}-\beta$ and decrease $U^{c_{1},c_{2}}$ by $c_{2}-c_{1}$. Again by strong Markov property we have:
\begin{equation}
\nonumber
\upsilon_{c_{1},c_{2}}^{\kappa^{r}}(c_{2}) = c_{2}-c_{1}-\beta + \upsilon_{c_{1},c_{2}}^{\kappa^{r}}(c_{1}) = c_{2}-c_{1}-\beta + \frac{V^{(q)}(c_{1})}{V^{(q)}(c_{2})}\upsilon_{c_{1},c_{2}}^{\kappa^{r}}(c_{2}).
\end{equation}
Next step is to just solve above equation with respect to $\upsilon_{c_{1},c_{2}}^{\kappa^{r}}$. We obtain:
\begin{equation}
\nonumber
\upsilon_{c_{1},c_{2}}^{\kappa^{r}}(c_{2}) = \frac{V^{(q)}(c_{2})}{V^{(q)}(c_{2})-V^{(q)}(c_{1})}(c_{2}-c_{1}-\beta)
\end{equation}
Finally, we must put above formula into $\eqref{star}$ to get the result.
\end{proof}
The idea of finding optimal points $(c_{1},c_{2})$ leads to finding minimum of the function below 
\begin{equation}
g(c_{1},c_{2}) = \frac{V^{(q)} (c_{2}) - V^{(q)} (c_{1})}{c_{2}-c_{1}-\beta}.
\end{equation}
Let us denote domain of this function as $dom(g) = \lbrace (c_{1},c_{2}): c_{1} \geq 0 , c_{2} > c_{1} + \beta \rbrace$. Let $C^{*}$ be a set of $(c_{1},c_{2})$ from $dom(g)$ that minimize function g:
\begin{equation}
\nonumber
C^{*} = \lbrace (c_{1}^{*},c_{2}^{*}) \in dom(g) : \inf_{(c_{1},c_{2}) \in dom(g)} g(c_{1},c_{2}) = g(c_{1}^{*},c_{2}^{*})\rbrace
\end{equation}
Also fix set $\mathcal{B} = \lbrace (c_{1},c_{2}) : (c_{1},c_{2}) \in dom(g), c_{1} \neq 0  \rbrace$
\begin{prop}\label{Prop derivative of v at c2}
For $W^{(q)} \in C^{1}(0,\infty)$ the set $C^{*}$ is not empty and for each $(c_{1}^{*},c_{2}^{*}) \in C^{*}$ we have
\begin{equation}\label{Derivative of v at c2}
V^{(q)\prime} (c_{2}^{*}) = \frac{V^{(q)} (c_{2}^{*}) - V^{(q)} (c_{1}^{*})}{c_{2}^{*}-c_{1}^{*}-\beta}.
\end{equation}
Also we know that in this case there are following possibilities : $(i)$ $V^{(q) \prime} (c_{1}^{*}) = V^{(q) \prime} (c_{2}^{*})$ or  $(ii)$ $c_{1}^{*} = 0$. \\
\begin{proof}
At the beginning we will show that if $c_{1} \rightarrow \infty$ function $g$ is not attaining its minimum.
\begin{equation}
\nonumber
\begin{split}
g(c_{1},c_{2}) =& \frac{V^{(q)} (c_{2}) - V^{(q)} (c_{1})}{c_{2}-c_{1}-\beta} = \int_{0}^{\infty}\Bigl(\frac{w^{(q)}(c_{2};-z)-w^{(q)}(c_{1};-z)}{c_{2}-c_{1}-\beta}\Bigr)\frac{z}{r}\mathbb{P}(X_{r} \in dz) 
\\ \geq & \int_{0}^{\infty}\left(\frac{W^{(q)}(c_{2}+z)-W^{(q)}(c_{1}+z)}{c_{2}-c_{1}}\right)\left(\frac{c_{2}-c_{1}}{c_{2}-c_{1}-\beta}\right)\frac{z}{r}\mathbb{P}(X_{r} \in dz) 
\\ >& \int_{0}^{\infty}\min_{x \in [c_{1}+z,c_{2}+z]}\qscaleprime (x)\frac{z}{r}\mathbb{P}(X_{r} \in dz) \geq \int_{0}^{\infty}\min_{x \in [c_{1},\infty)}\qscaleprime (x)\frac{z}{r}\mathbb{P}(X_{r} \in dz) 
\\ =&  \min_{x \in [c_{1},\infty)}\qscaleprime (x) \int_{0}^{\infty}\frac{z}{r}\mathbb{P}(X_{r} \in dz) \stackrel{c_{1} \rightarrow \infty} \longrightarrow \infty. 
\end{split}
\end{equation}
In the first inequality we used
\begin{equation}\label{Inequality on w and W}
\wq (c_{2};-z) - \wq (c_{1};-z) \geq \qscale (c_{2}+z) - \qscale (c_{1}+z).
\end{equation}
Next inequality follows from the mean value theorem $( \qscale \in C^{1}(0,\infty) )$ and the simple fact that $\frac{c_{2}-c_{1}}{c_{2}-c_{1}-\beta} > 1$. Last inequality is a consequence of 
$[c_{1}+z,c_{2}+z] \subseteq [c_{1},\infty)$ for all $(c_{1},c_{2}) \in dom(g)$ and all $z > 0$. 
Note that  $\int_{0}^{\infty} \frac{z}{r} \p (X_{r} \in dz) > 0$ and for that reason last statement follows. 
We get that, $\inf_{(c_{1},c_{2})\in dom(g)} g(c_{1},c_{2})$ is not attained when $c_{1} \rightarrow \infty$, thus we can assume that there exist $C_{1} > 0$ such that 
\[
\inf_{(c_{1},c_{2})\in dom(g)} g(c_{1},c_{2}) = \inf_{(c_{1},c_{2})\in dom(g)\wedge c_{1} \leq C_{1}} g(c_{1},c_{2})
\]
In the next step we will show the same for $c_{2}$. Namely
\begin{equation}
\nonumber
\begin{split}
\inf_{c_{1} \in [0,C_{1}]} g(c_{1},c_{2}) =&  \inf_{c_{1} \in [0,C_{1}]}
\frac{V^{(q)} (c_{2})-V^{(q)} (c_{1})}{c_{2}-c_{1}-\beta} 
\\ \geq &  \inf_{c_{1} \in [0,C_{1}]}\int_{0}^{\infty}\left(\frac{\qscale (c_{2}+z)-\qscale (c_{1}+z)}{c_{2}-c_{1}-\beta}\right)\frac{z}{r}\p (X_{r} \in dz)  
\\\geq & \Bigl( \frac{\qscale (c_{2})}{c_{2}-\beta} \int_{0}^{\infty} \frac{z}{r} \p (X_{r} \in dz)\\-&  \frac{1}{c_{2}-C_{1}-\beta} \int_{0}^{\infty} \qscale (C_{1}+z)\frac{z}{r} \p (X_{r}\in dz) \Bigr)
\stackrel{c_{2} \rightarrow \infty}\longrightarrow \infty. 
\end{split}
\end{equation}
Note that we used only (\ref{Inequality on w and W}) $($in the first inequality$)$ and the property that $\qscale$ is increasing $($in the second inequality$)$. Last step is to consider the case when $(c_{1},c_{2})$ converge to the line $c_{2} = c_{1} + \beta$. 
\begin{equation}
\nonumber
\begin{split}
g(c_{1},c_{2}) =& \int_{0}^{\infty}\left(\frac{w^{(q)}(c_{2};-z)-w^{(q)}(c_{1};-z)}{c_{2}-c_{1}-\beta}\right)\frac{z}{r}\mathbb{P}(X_{r} \in dz) 
\\ \geq &  \int_{0}^{\infty}\min_{x\in [c_{1}+z,c_{2}+z]}\qscaleprime (x)\left(\frac{\beta}{c_{2}-c_{1}-\beta}\right)\frac{z}{r}\mathbb{P}(X_{r} \in dz)\\ \geq &  \qscaleprime (a^{*}) 
\frac{\beta}{c_{2}-c_{1}-\beta} \int_{0}^{\infty} \frac{z}{r} \p (X_{r}\in dz) \rightarrow \infty.
\end{split}
\end{equation}
We used, again, mean value theorem and fact that $c_{2} > c_{1} + \beta$. We check that infimum of $g$ is not reached when $c_{1} \rightarrow \infty$ or $c_{2} \rightarrow \infty$ or $(c_{1},c_{2})$ 
converge to $c_{2} = c_{1} + \beta$. Because of it and the continuity of $g$ we get that $C^{*}$ is not empty and we are left with the following possibilities
\begin{enumerate}[(i)]
\item First is that $(c_{1}^{*},c_{2}^{*})$ belong to the interior of $\mathcal{B}$. In this case, using the fact that $g$ is partial differentiable in $c_{1}$ and $c_{2}$ $( \qscale \in C^{1}(0,\infty))$, we get that
\begin{equation}
\nonumber
\frac{\partial g(c_{1},c_{2})}{\partial c_{1}}(c_{1}^{*}) = 0 \quad \text{and} \quad \frac{\partial g(c_{1},c_{2})}{\partial c_{2}}(c_{2}^{*}) = 0.
\end{equation}
And hence  we obtain \eqref{Derivative of v at c2} and  $(i)$.
\item The second possibility is when $c_{1}^{*} = 0$.
 Then we have that $c_{2}^{*}$ minimizes function $g_{0}(c_{2}) = g(0,c_{2}) = \frac{V^{(q)} (c_{2}) - V^{(q)} (0)}{c_{2} -\beta}$. We get $(ii)$ because $g_{0}^{\prime}(c_{2}^{*}) = 0$. 
\end{enumerate} 
\end{proof}
\end{prop}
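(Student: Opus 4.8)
The plan is to split the argument into two independent stages. First I would show that the infimum of $g$ over $dom(g)$ is actually attained, so that $C^{*}$ is nonempty; this is the delicate part, since $dom(g)$ is unbounded and the denominator $c_2 - c_1 - \beta$ degenerates on the diagonal $c_2 = c_1 + \beta$. Second, granted a minimizer $(c_1^{*}, c_2^{*})$, I would read off \eqref{Derivative of v at c2} together with alternative $(i)$ or $(ii)$ from the first-order conditions, which is essentially a one-line calculus computation once differentiability of $g$ is in place.

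For nonemptiness, I would prove that $g \to \infty$ as $(c_1, c_2)$ approaches each unbounded or degenerate part of the boundary of $dom(g)$, namely (a) $c_1 \to \infty$, (b) $c_2 \to \infty$ with $c_1$ bounded, and (c) $c_2 - c_1 - \beta \to 0^{+}$. The uniform engine is the lower bound $w^{(q)}(c_2;-z) - w^{(q)}(c_1;-z) \geq W^{(q)}(c_2+z) - W^{(q)}(c_1+z)$, immediate from the representation \eqref{small w} since its refraction correction is nondecreasing in the spatial argument; plugging this into $V^{(q)}(c_2) - V^{(q)}(c_1) = \int_0^{\infty} [w^{(q)}(c_2;-z) - w^{(q)}(c_1;-z)]\frac{z}{r}\mathbb{P}(X_r \in dz)$ and applying the mean value theorem (legitimate because $W^{(q)} \in C^1(0,\infty)$) reduces every case to estimating $W^{(q)\prime}$. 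In case (a) this gives $g \geq \inf_{x \geq c_1} W^{(q)\prime}(x)\int_0^{\infty} \frac{z}{r}\mathbb{P}(X_r \in dz) \to \infty$, using $W^{(q)\prime}(x) \to \infty$ as $x \to \infty$ (valid since $q > 0$ forces $\Phi(q) > 0$); in case (b) I would keep the two scale terms separate to obtain $g \geq \frac{W^{(q)}(c_2)}{c_2 - \beta}\int_0^{\infty} \frac{z}{r}\mathbb{P}(X_r\in dz) - o(1) \to \infty$ by the exponential growth of $W^{(q)}$; and in case (c) the identity $c_2 - c_1 = (c_2 - c_1 - \beta) + \beta$ produces a factor $\frac{\beta}{c_2 - c_1 - \beta}$ multiplying the strictly positive constant $\inf_{x \geq 0} W^{(q)\prime}(x)$, which blows up. With these three limits and continuity of $g$ on the interior, any minimizing sequence stays in a compact set, so the infimum is attained.

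For the characterization I would use that $V^{(q)} \in C^1$, inherited from $W^{(q)} \in C^1$ via Proposition \ref{Derivative of w exist} and differentiation under the integral sign in the definition of $V^{(q)}$, so that $g$ is partially differentiable wherever $c_1 > 0$. If a minimizer lies in the interior of $\mathcal{B}$ then $\partial_{c_2} g = 0$ and $\partial_{c_1} g = 0$; the first rearranges precisely to $V^{(q)\prime}(c_2^{*}) = g(c_1^{*}, c_2^{*})$, which is \eqref{Derivative of v at c2}, and the second gives $V^{(q)\prime}(c_1^{*}) = g(c_1^{*}, c_2^{*})$, so equating the two right-hand sides yields alternative $(i)$. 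If instead $c_1^{*} = 0$, only the one-sided stationarity $g_0'(c_2^{*}) = 0$ for $g_0(c_2) := g(0, c_2)$ is available; it again rearranges to \eqref{Derivative of v at c2} and records alternative $(ii)$.

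The main obstacle is the nonemptiness stage, and within it the case (c) of the vanishing denominator: one must be sure the numerator $V^{(q)}(c_2) - V^{(q)}(c_1)$ stays bounded away from zero as $c_2 - c_1 - \beta \to 0^{+}$, which is exactly what the mean value theorem plus $\inf_{x \geq 0} W^{(q)\prime}(x) > 0$ delivers. The other quantitative inputs — the limit $W^{(q)\prime}(x) \to \infty$, the exponential growth of $W^{(q)}$, and the finiteness of $\int_0^{\infty} \frac{z}{r}\mathbb{P}(X_r \in dz)$ and of the weighted scale integrals — are standard, after which the first-order analysis is purely mechanical.
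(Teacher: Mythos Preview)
Your proposal is correct and follows essentially the same route as the paper: the same three boundary cases (a)--(c), the same key inequality $w^{(q)}(c_2;-z)-w^{(q)}(c_1;-z)\geq W^{(q)}(c_2+z)-W^{(q)}(c_1+z)$ derived from \eqref{small w}, the same use of the mean value theorem and growth of $W^{(q)\prime}$ to force $g\to\infty$ on each piece of the boundary, and the same first-order case split between an interior minimizer (giving $(i)$) and $c_1^{*}=0$ (giving $(ii)$). The only cosmetic difference is that the paper writes the global lower bound on $W^{(q)\prime}$ as $W^{(q)\prime}(a^{*})$ rather than $\inf_{x\geq 0}W^{(q)\prime}(x)$, which is the same quantity.
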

To start the optimisation reasoning, we need the following proposition and lemma.
\begin{prop}\label{optimal_cases}
Assume that $W^{(q)} \in C^{1}(0,\infty)$. Then for each $(c_{1}^{*},c_{2}^{*}) \in C^{*}$ we have that
\begin{equation}
\nonumber
\upsilon^{\kappa^{r}}_{c_{1}^{*},c_{2}^{*}}(x) = \begin{cases}
\frac{V^{(q)} (x)}{V^{(q) \prime} (c_{2}^{*})} & \text{for } x \leq c_{2}^{*}, \\ (x-c_{2}^{*}) + \frac{V^{(q)} (c_{2}^{*})}{V^{(q)\prime} (c_{2}^{*})} & \text{for } x > c_{2}^{*}.
\end{cases}
\end{equation}
\begin{proof}
From Proposition $($\ref{Prop derivative of v at c2}$)$ it follows that:
\begin{itemize}
\item for  $x \leq c_{2}^{*}$:
\begin{equation}
\nonumber
\valueP (x) = (c_{2}^{*}-c_{1}^{*}-\beta)\frac{V^{(q)}(x)}{V^{(q)}(c_{2}^{*}) - V^{(q)}(c_{1}^{*})} = \frac{V^{(q)}(x)}{V^{(q)'}(c_{2}^{*})},
\end{equation}

\item For  $ x  > c_{2}^{*}:$
\begin{equation}
\nonumber
\begin{split}
\valueP (x) &= x-c_{1}^{*}-\beta + (c_{2}^{*} -c_{1}^{*}-\beta)\frac{V^{(q)}(c_{1}^{*})}{\vq (c_{2}^{*}) - \vq (c_{1}^{*})} 
\\&= x-c_{2}^{*} + (c_{2}^{*} - c_{1}^{*}-\beta)\frac{V^{(q)}(c_{2}^{*})}{\vq (c_{2}^{*}) - \vq (c_{1}^{*})} =  x-c_{2}^{*} + \frac{\vq (c_{2}^{*})}{\vqprime (c_{2}^{*})}.
\end{split}
\end{equation}
\end{itemize} 
\end{proof}
\end{prop}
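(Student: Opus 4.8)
The plan is to substitute the critical-point identity \eqref{Derivative of v at c2} into the explicit value-function formula \eqref{v_first} of Proposition \ref{first_representation}; no new probabilistic input is needed, since the hypothesis $\qscale \in C^1(0,\infty)$ serves only to guarantee, through Proposition \ref{Prop derivative of v at c2}, that $C^*$ is nonempty and that \eqref{Derivative of v at c2} holds at each $(c_1^*,c_2^*)\in C^*$. Before starting I would record that the denominator $\vq(c_2^*)-\vq(c_1^*)$ is strictly positive: it equals $g(c_1^*,c_2^*)(c_2^*-c_1^*-\beta)$, and the infimum value $g(c_1^*,c_2^*)$ is positive by the chain of estimates established in the proof of Proposition \ref{Prop derivative of v at c2}. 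This legitimises both the reciprocal rewriting below and the subsequent fraction manipulations.

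First I would rewrite \eqref{Derivative of v at c2} in the equivalent reciprocal form
\begin{equation*}
\frac{c_2^*-c_1^*-\beta}{\vq(c_2^*)-\vq(c_1^*)}=\frac{1}{\vqprime(c_2^*)}.
\end{equation*}
Inserting this directly into the first branch of \eqref{v_first} gives $\valueP(x)=\vq(x)/\vqprime(c_2^*)$ for $x\le c_2^*$, which is exactly the claimed upper case. For the region $x>c_2^*$ the only real work is algebraic bookkeeping. I would split the linear term as $x-c_1^*-\beta=(x-c_2^*)+(c_2^*-c_1^*-\beta)$, so that the second branch of \eqref{v_first} becomes
\begin{equation*}
\valueP(x)=(x-c_2^*)+(c_2^*-c_1^*-\beta)\left[1+\frac{\vq(c_1^*)}{\vq(c_2^*)-\vq(c_1^*)}\right].
\end{equation*}
Combining the bracketed terms over the common denominator collapses the numerator to $\vq(c_2^*)$, and a final application of the reciprocal identity converts $(c_2^*-c_1^*-\beta)\vq(c_2^*)/(\vq(c_2^*)-\vq(c_1^*))$ into $\vq(c_2^*)/\vqprime(c_2^*)$, yielding precisely the claimed lower case.

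Since the argument is a straight computation resting on two results already in hand, I do not expect any genuine obstacle. The only point requiring minor care is confirming positivity of $\vq(c_2^*)-\vq(c_1^*)$ so that the reciprocal form of \eqref{Derivative of v at c2} and the fraction simplifications are valid; I would dispatch this once at the outset as indicated above, after which both cases follow by substitution and elementary algebra.
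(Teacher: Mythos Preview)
Your proposal is correct and follows essentially the same approach as the paper: both proofs substitute the first-order identity \eqref{Derivative of v at c2} into the explicit formula \eqref{v_first}, handling the $x>c_2^*$ case by the same splitting $x-c_1^*-\beta=(x-c_2^*)+(c_2^*-c_1^*-\beta)$ and recombining over a common denominator. Your extra remark on the positivity of $\vq(c_2^*)-\vq(c_1^*)$ is a small nicety the paper leaves implicit.
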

\begin{lemma}\label{inequality}
Let $(c_{1}^{*},c_{2}^{*})\in C^{*}$ and $x\geq y \geq 0$. Then:
\begin{equation}\label{lemma6}
\valueP (x) - \valueP (y) \geq x-y-\beta
\end{equation}
\begin{proof}
Note that $\valueP$ is an increasing function and because of that one can assume $x-y > \beta$. Consider the following possibilities:
\begin{enumerate}
\item for $c_{2}^{*}\leq y \leq x,$
\begin{equation}
\nonumber
\valueP (x) - \valueP (y) = x-y > x-y-\beta.
\end{equation}
\item For $y \leq  x\leq c_{2}^{*}$,
\begin{equation}
\begin{split}
\nonumber
\valueP (x) - \valueP (y) =& \frac{(c_{2}^{*}-c_{1}^{*}-\beta)(\vq (x) - \vq (y))}{\vq (c_{2}^{*}) - \vq (c_{1}^{*})} \\\geq &
\frac{(x-y-\beta)(\vq (x) - \vq (y))}{\vq (x) - \vq (y)} = x-y-\beta.
\end{split}
\end{equation}
The above inequality follows from fact that $(c_{1}^{*},c_{2}^{*})\in C^{*}$, so $(c_{1}^{*},c_{2}^{*})$ minimize function $g(c_{1},c_{2}) = \frac{\vq (c_{2}) - \vq (c_{1})}{c_{2}-c_{1}-\beta}$
\item For $y \leq c_{2}^{*} \leq x$,
\begin{equation}
\nonumber
\begin{split}
\valueP (x) - \valueP (y) & = x-c_{1}^{*}-\beta + (c_{2}^{*}-c_{1}^{*}-\beta)\left(\frac{\vq (c_{1}^{*})-\vq (y)}{\vq (c_{2}^{*})-\vq (c_{1}^{*})} \right)
\\&= x-c_{2}^{*} + (c_{2}^{*}-c_{1}^{*}-\beta)\left(1+\frac{\vq (c_{1}^{*}) - \vq (y)}{\vq (c_{2}^{*})-\vq (c_{1}^{*})}\right) 
\\&= x-c_{2}^{*} + (c_{2}^{*}-c_{1}^{*}-\beta)\left(\frac{\vq (c_{2}^{*}) - \vq (y)}{\vq (c_{2}^{*}) - \vq (c_{1}^{*})}\right)
 \\&\geq  x-y-\beta.
\end{split}
\end{equation}
Since $y\leq c_2^{*}$, the last inequality follows from point (2) with $x=c_2^{*}$.
\end{enumerate} 
\end{proof}
\end{lemma}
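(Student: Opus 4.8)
The plan is to reduce \eqref{lemma6} to a case analysis on the position of $x$ and $y$ relative to the threshold $c_2^{*}$, exploiting the explicit two-piece form of $\valueP$ furnished by Proposition \ref{optimal_cases}. First I would dispose of the trivial regime: since $\valueP$ is increasing, the inequality is automatic whenever $x-y \leq \beta$, because the right-hand side is then nonpositive while the left-hand side is nonnegative. Hence throughout the argument I may assume $x-y > \beta$, which has the additional benefit that the pair $(y,x)$ then satisfies $x > y + \beta$ and $y \geq 0$, so $(y,x) \in dom(g)$ and is a legitimate competitor for the minimizer.

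Next I would split into three cases. When $c_2^{*} \leq y \leq x$, both points lie in the linear branch of $\valueP$, so $\valueP(x) - \valueP(y) = x - y$, and since $\beta > 0$ the claim follows immediately. When $y \leq x \leq c_2^{*}$, both points sit in the first branch, and I would express the difference using Proposition \ref{first_representation} as $(c_2^{*} - c_1^{*} - \beta)\,\frac{\vq(x) - \vq(y)}{\vq(c_2^{*}) - \vq(c_1^{*})}$. Here the defining minimality of $g$ at $(c_1^{*},c_2^{*})$ enters: from $g(c_1^{*},c_2^{*}) \leq g(y,x)$ one obtains $\frac{c_2^{*} - c_1^{*} - \beta}{\vq(c_2^{*}) - \vq(c_1^{*})} \geq \frac{x - y - \beta}{\vq(x) - \vq(y)}$, and substituting this into the difference cancels the scale-function factor and leaves exactly $x - y - \beta$.

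The remaining mixed case $y \leq c_2^{*} \leq x$ is where the two branches meet, and I would handle it by telescoping through $c_2^{*}$, writing $\valueP(x) - \valueP(y) = \bigl(\valueP(x) - \valueP(c_2^{*})\bigr) + \bigl(\valueP(c_2^{*}) - \valueP(y)\bigr)$. The first bracket equals $x - c_2^{*}$ from the linear branch, while the second is precisely the second case applied with upper endpoint $c_2^{*}$, hence $\geq c_2^{*} - y - \beta$; adding the two reproduces the desired bound $x - y - \beta$.

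The main obstacle is the middle case, and specifically the step that transfers the global minimality of $C^{*}$ into a pointwise comparison. The delicate point is that $(y,x)$ must be an admissible argument of $g$ before $g(c_1^{*},c_2^{*}) \leq g(y,x)$ can be invoked, and this is exactly what the preliminary reduction $x - y > \beta$ secures. Once that comparison is justified, the rest is algebraic cancellation, and the mixed case follows formally from the middle one, so no further analytic input is needed.
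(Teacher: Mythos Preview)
Your proposal is correct and follows essentially the same approach as the paper: the same monotonicity reduction to $x-y>\beta$, the same three-case split around $c_2^{*}$, and the same appeal to the minimality of $g$ at $(c_1^{*},c_2^{*})$ in the middle case. Your telescoping in the mixed case is exactly what the paper does via explicit algebra (the paper rewrites the difference as $x-c_2^{*}$ plus $(c_2^{*}-c_1^{*}-\beta)\frac{\vq(c_2^{*})-\vq(y)}{\vq(c_2^{*})-\vq(c_1^{*})}$ and then invokes case~(2) with $x=c_2^{*}$), so the two arguments are the same up to presentation.
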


\subsection{Optimality}\label{optimality}
For the remainder of the paper, we will focus on verifying the optimality of the impulse
strategy at threshold level $(c_{1}^{*}, c_{2}^{*})$. The proof is led by standard Markovian arguments to show that the impulse
strategy fulfils the Verification Lemma. 
However at the beginning we will prove the following fact. 
\begin{fact}
Refracted process $R$ is a Feller process and its infinitesimal generator  
is of the form 
\begin{equation}\label{generator}
\Gamma f(x) =(\gamma-\delta\mathbf{1}_{\lbrace x >0 \rbrace})  f'(x) + \frac{1}{2}\sigma^2 f^{''}(x) - \int_0^{\infty}\left(f(x+z)- f(x)-f'(x)z\mathbf{1}_{\{0<z<1\}}\right)\Pi(dz),
\end{equation}
where $x \in \mathbb{R}$ and f is a function on $\mathbb{R}$ such that  $\Gamma f(x)$ is well defined. 
\end{fact}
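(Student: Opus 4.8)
The plan is to split the statement into two independent parts: first, that $R$ generates a Feller semigroup $P_t f(x) = \mathbb{E}_x[f(R_t)]$ on $C_0(\mathbb{R})$, and second, that its generator acts as $\Gamma$ in \eqref{generator} on a suitable core. Throughout I would work from the pathwise identity $R_t = X_t - \delta \int_0^t \mathbf{1}_{\{R_s > 0\}}\, \mathrm{d}s$, using that $R$ is the unique strong solution (Kyprianou--Loeffen \cite{KL10}) and hence a strong Markov process. Since $0 \le \int_0^t \mathbf{1}_{\{R_s>0\}}\,\mathrm{d}s \le t$, this identity immediately yields the sandwich $Y_t \le R_t \le X_t$ when the three processes are driven by the same $X$ and started from the same point, which will be the tool for controlling $R$ near $\pm\infty$.

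For the Feller property the key structural observation is that the flow is non-expansive in the initial condition. Writing $R^x$ for the solution started at $x$ and $D_t = R^{x_2}_t - R^{x_1}_t$ for $x_1 \le x_2$, the driving noise cancels in the difference, so $D$ is absolutely continuous with $\mathrm{d}D_t = -\delta(\mathbf{1}_{\{R^{x_2}_t>0\}} - \mathbf{1}_{\{R^{x_1}_t>0\}})\,\mathrm{d}t$. Pathwise uniqueness forces $D_t \ge 0$ (the two solutions coincide after any time they meet), whence the integrand is nonnegative and $D$ is nonincreasing; thus $0 \le R^{x_2}_t - R^{x_1}_t \le x_2 - x_1$. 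Consequently $P_t f$ has a modulus of continuity no worse than that of $f$, so $P_t$ preserves bounded uniformly continuous functions; vanishing at infinity follows from the sandwich, since $R^x_t \ge Y^x_t \to +\infty$ as $x\to+\infty$ and $R^x_t \le X^x_t \to -\infty$ as $x\to-\infty$, so $P_t f(x)\to 0$ for $f\in C_0$ by dominated convergence. This gives $P_t : C_0(\mathbb{R}) \to C_0(\mathbb{R})$. Strong continuity then follows from right-continuity of the paths ($R^x_t \to x$ almost surely), which yields pointwise convergence $P_t f(x)\to f(x)$, upgraded to uniform convergence by a standard argument using the mapping property; this establishes the Feller property.

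For the generator I would verify \eqref{generator} on the core $C_c^2(\mathbb{R})$ via Dynkin's formula. Using the L\'evy--It\^o decomposition of $X$ into its drift $\gamma$, Gaussian part $\sigma B$, and (compensated) jump part governed by $\Pi$, the process $R$ has a continuous part with drift $\gamma - \delta\mathbf{1}_{\{R_s>0\}}$ and quadratic variation $\sigma^2 s$, while its jumps coincide with those of $X$, since the refraction modifies only the continuous drift. Applying It\^o's formula to $f(R_t)$ and taking $\mathbb{E}_x$, the Brownian and compensated-jump integrals are martingales and vanish, leaving $\mathbb{E}_x[f(R_t)] - f(x) = \mathbb{E}_x\big[\int_0^t \Gamma f(R_s)\,\mathrm{d}s\big]$, with the first-order term $(\gamma - \delta\mathbf{1}_{\{x>0\}})f'$ coming from the drift, $\tfrac12\sigma^2 f''$ from the Gaussian part, and the integral term from the compensated Poisson measure with intensity $\Pi$, matching \eqref{generator}. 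Dividing by $t$, letting $t\downarrow 0$, and using that $\Gamma f$ is bounded and that $s\mapsto \Gamma f(R_s)$ is right-continuous at $0$, I obtain $\lim_{t\downarrow 0} (P_t f(x) - f(x))/t = \Gamma f(x)$.

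The main obstacle I anticipate is the discontinuity of the drift at $0$. For the drift term to be unambiguously defined and for Dynkin's formula to reproduce exactly $(\gamma - \delta\mathbf{1}_{\{x>0\}})f'$ --- including the correct value at the single point $x=0$ --- one needs the occupation time of $R$ at level $0$ to be Lebesgue-null, i.e. $\int_0^t \mathbf{1}_{\{R_s=0\}}\,\mathrm{d}s = 0$ almost surely. I would justify this from the fluctuation structure of the spectrally negative process: it leaves $0$ immediately, crossing upward only continuously through the drift and downward only by jumps, so the level $0$ is not sticky and contributes nothing to the time integral. A secondary point is verifying the requisite boundedness of $\Gamma f$ on the core so that the passage to the limit is legitimate; this is routine once the null-occupation fact is established, as are the remaining standard steps (the uniform upgrade in the strong-continuity argument and the martingale property of the stochastic integrals).
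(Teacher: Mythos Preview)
Your approach is sound but genuinely different from the paper's. The paper proves the Feller property via the Hille--Yosida resolvent criteria: it verifies that $P_R^{(q)} f(x) = \mathbb{E}_x\bigl[\int_0^\infty e^{-qt} f(R_t)\,dt\bigr]$ satisfies the resolvent equation (immediate from the strong Markov property of $R$ established in \cite{KL10}), is sub-Markov, maps $C_0$ to $C_0$, and obeys $\|qP_R^{(q)} f - f\|\to 0$; for the latter two it invokes the argument of \cite{KNKY2016}, which in turn rests on the explicit resolvent/fluctuation identities for $R$ from \cite{KL10}. The generator form is then simply read off from \cite{KDSR16}. Your route is more direct and self-contained: the pathwise coupling giving $0\le R^{x_2}_t - R^{x_1}_t \le x_2 - x_1$ (via continuity of the difference and pathwise uniqueness) yields the $C_0\to C_0$ mapping without any fluctuation theory, and the It\^o/Dynkin computation gives the generator without appeal to an external reference. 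What the paper's approach buys is brevity through citation; what yours buys is independence from the exit-problem machinery. One caveat on your side: calling $C_c^2(\mathbb{R})$ a \emph{core} is too strong, since for $f$ with $f'(0)\neq 0$ the expression $\Gamma f$ has a jump at $0$ and so lies outside $C_0$, hence such $f$ cannot be in the Feller domain; the null-occupation fact you cite makes the Dynkin identity hold but does not repair this. The paper sidesteps the issue by leaving the domain informal (``$f$ such that $\Gamma f(x)$ is well defined'') and outsourcing the details to \cite{KDSR16}.
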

\begin{proof} For  $q > 0$, $x \in \mathbb{R}$ and a non-negative or bounded measurable function $f$
define  $P_R^{(q)}f:=\mathbb{E}_x\left[\int_0^{\infty} e^{-q t} f(R_t) dt \right]$. It is sufficient to verify the following conditions:
\begin{enumerate}
\item For all $q,p >0$, $P_R^{(q)}-P_R^{(p)}=(p-q)P_R^{(q)}P_R^{(p)}$.
\item For all $q>0$, $\left\| q P_R^{(q)} 1\right\| \leq 1$.
\item For all  $q > 0$, $P_R^{(q)}$ is a map from $C_0$ to $C_0$.
\item For all $f \in C_0$, $\lim_{q \to \infty }\left\| q P_R^{(q)} f -f\right\| =0$.
\end{enumerate}
Here $C_0(\mathbb{R})$ denotes the space of continuous functions vanishing at infinity. It is a Banach space when equipped with
the uniform norm $\left\|f\right\|= \sup_{x \in \mathbb{R}} |f(x)|$. \\

Since the process $R$ is a Strong Markov process (for details see \cite{KL10}) one can observe that condition
(1) is automatically fulfilled. Condition (2) is obvious. To prove (3) and (4) the reasoning is similar as in \cite{KNKY2016} except that we need to use fluctuation identities obtained in \cite{KL10}.\\

The form of the generator follows i.a. from \cite{KDSR16} with $l(x)=\gamma-\delta\mathbf{1}_{\lbrace x >0 \rbrace}$ and $Q(x)=\sigma^2$. 
\end{proof}

\begin{lemma} [Verification Lemma]
Suppose $\hat{\pi}$ is an admissible dividend strategy such that $v_{\hat{\pi}}$ is sufficiently smooth on $\mathbb{R}$
(i.e. its first or second derivative (for $X$ of bounded or unbounded variation respectively) has at most finite number of single discontinuities), satisfies 

			\begin{align} \label{HJB-inequality}
		&(\Gamma- q)v_{\hat{\pi}}(x) \leq 0, \quad \textrm{for} \quad x \in \mathbb{R}, \\ \label{HJB-inequality_second_term}
		 &v_{\hat{\pi}}(x) - v_{\hat{\pi}}(y) \geq x-y-\beta,
		\quad \textrm{for} \quad x  \geq y.
				\end{align} 
	Then $v_{\hat{\pi}}(x)=v_{*} (x)$ for almost every $x \in \mathbb{R}$ and hence $\hat{\pi}=\pi_{*}$ is an optimal strategy.
\end{lemma}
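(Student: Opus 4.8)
The plan is to show that $v_{\hat{\pi}}$ dominates the value function of \emph{every} admissible strategy, and then to use the admissibility of $\hat\pi$ itself to upgrade this domination to an equality. Concretely, I would first prove that $v_{\hat{\pi}}(x) \geq \upsilon^{\kappa^{r}}_{\pi}(x)$ for all $\pi \in \mathcal{A}$ and all $x \geq 0$; taking the supremum over $\pi$ then gives $v_{\hat{\pi}} \geq \upsilon_{*}$. Since $\hat\pi$ is admissible, the reverse inequality $v_{\hat{\pi}} \leq \upsilon_{*}$ holds by the very definition of the supremum, and the two bounds force $v_{\hat{\pi}} = \upsilon_{*}$, that is, $\hat{\pi} = \pi_{*}$ is optimal.

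The core of the argument is a Dynkin-type computation. Fix $\pi \in \mathcal{A}$ with controlled process $U^{\pi}_{t} = R_{t} - L^{\pi}_{t}$, and apply the change-of-variables formula to the discounted process $e^{-q t} v_{\hat{\pi}}(U^{\pi}_{t})$ stopped at $t \wedge \kappa^{r}$. Since $R$ is a Feller process with generator $\Gamma$ given in \eqref{generator}, the continuous and compensated parts of this decomposition produce the term $\int_{0}^{t \wedge \kappa^{r}} e^{-q s}(\Gamma - q)v_{\hat{\pi}}(U^{\pi}_{s})\,\md s$ together with a local martingale $M$, while the downward jumps of $U^{\pi}$ caused by the dividend payments contribute $\sum_{0 \leq s < t \wedge \kappa^{r}} e^{-q s}\bigl(v_{\hat{\pi}}(U^{\pi}_{s+}) - v_{\hat{\pi}}(U^{\pi}_{s})\bigr)$. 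The HJB inequality \eqref{HJB-inequality} makes the integrand non-positive, so that term only helps after taking expectations; and \eqref{HJB-inequality_second_term}, applied with $x = U^{\pi}_{s}$ and $y = U^{\pi}_{s} - \Delta L^{\pi}_{s} = U^{\pi}_{s+}$, gives $v_{\hat{\pi}}(U^{\pi}_{s}) - v_{\hat{\pi}}(U^{\pi}_{s+}) \geq \Delta L^{\pi}_{s} - \beta \mathbf{1}_{\{\Delta L^{\pi}_{s} > 0\}}$, i.e.\ each downward jump of $v_{\hat{\pi}}$ is at least the net dividend paid out at that instant.

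Taking $\e_{x}$, the local martingale term is removed along a localizing sequence of stopping times (so that $M$ becomes a true martingale of zero mean), and rearranging yields
\begin{equation}
\nonumber
v_{\hat{\pi}}(x) \geq \e_{x}\Bigl[ e^{-q(t\wedge\kappa^{r})} v_{\hat{\pi}}(U^{\pi}_{t\wedge\kappa^{r}})\Bigr] + \e_{x}\Bigl[ \sum_{0 \leq s < t\wedge\kappa^{r}} e^{-q s}\bigl(\Delta L^{\pi}_{s} - \beta \mathbf{1}_{\{\Delta L^{\pi}_{s} > 0\}}\bigr)\Bigr].
\end{equation}
Because $v_{\hat{\pi}} \geq 0$, the first expectation is non-negative, and letting $t \to \infty$ and invoking monotone convergence on the dividend sum identifies the limit of the right-hand side as exactly $\upsilon^{\kappa^{r}}_{\pi}(x)$. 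This gives $v_{\hat{\pi}}(x) \geq \upsilon^{\kappa^{r}}_{\pi}(x)$ and completes the domination step.

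The main obstacle is the legitimacy of the change-of-variables formula, since $v_{\hat{\pi}}$ is assumed smooth only up to finitely many points where its first derivative (bounded variation case) or second derivative (unbounded variation case) jumps. At these points the classical It\^o formula does not apply, and one must instead use a Meyer--It\^o / It\^o--Tanaka formula, checking that the local time the process $U^{\pi}$ spends at each exceptional point contributes nothing; this is also the source of the ``almost every $x$'' in the statement. A secondary technical point is to exhibit an explicit localizing sequence and to justify the integrability needed both to pass the local martingale to a genuine martingale and to interchange the limit with the expectation; here the non-negativity of $v_{\hat{\pi}}$, its at-most-linear growth above $c_{2}^{*}$ (visible from Proposition \ref{optimal_cases}), and the discounting $e^{-qs}$ provide the required control.
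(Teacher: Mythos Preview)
Your proposal is correct and follows essentially the same approach as the paper: apply a Meyer--It\^o / change-of-variables formula to $e^{-qt}v_{\hat\pi}(U^\pi_t)$ along a localizing sequence, use \eqref{HJB-inequality} to discard the drift term and \eqref{HJB-inequality_second_term} to bound the jump contributions below by the net dividends, then pass to the limit using $v_{\hat\pi}\geq 0$. The paper makes the localizing sequence explicit as $T_n:=\inf\{t>0: U^\pi_t>n\text{ or } t-\sup\{s\leq t:U^\pi_s\geq 1/n\}>r\}$ (so that $T_n\uparrow\kappa^r$), and invokes the Bouleau--Yor formula in the bounded-variation case, but otherwise the argument is the same as yours.
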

\begin{proof}
By the definition of $v_{*}$ as a supremum, it follows that $v_{\hat{\pi}}(x)\leq v_{*}(x)$ for all $x\in\mathbb{R}$. We write $h:=v_{\hat{\pi}}$ and show that 
$h(x)\geq v_\pi(x)$ for all $\pi\in\mathcal{A}$ for all $x \in \mathbb{R}$. 
	
	Fix $\pi\in \mathcal{A}$. 
	Let $(T_n)_{n\in\mathbb{N}}$ be the sequence of stopping times defined by $T_n :=\inf\{t>0:U^\pi_t>n \textrm{ or } t-\sup{\{ s\leq t: {U}^\pi_t \geq 1/n \}}>r\}$. 
	Since ${U}^\pi$ is a semimartingale (see e.g., \cite{Schilling98}, \cite{Schnurr2012}) and $h$ is sufficiently smooth on $(0, \infty)$ we will use to the stopped process $(e^{-q(t\wedge T_n)}h({U}^\pi_{t\wedge T_n}); t \geq 0)$ the Bouleau and Yor \cite{BY81} formula for bounded variation processes and the change of variables/Meyer-It\^o's formula (cf.\ Theorem IV.71 of \cite{protter}) for unbounded variation case, and deduce that under $\mathbb{P}_x$: 
	\begin{equation*}
	\label{impulse_verif_1}
	\begin{split}
	e^{-q(t\wedge T_n)}h({U}^\pi_{t\wedge T_n})-h(x)
	= &-\int_{0}^{t\wedge T_n}e^{-qs} q h({U}^\pi_{s-}) \mathrm{d}s + \frac{\sigma^2}{2}\int_0^{t\wedge T_n}e^{-qs}h''({U}^\pi_{s-})\mathrm{d}s \\
	&+\int_{[0, t\wedge T_n]}e^{-qs}h'({U}^\pi_{s-}) \mathrm{d}  ( R_s- {L}^\pi_s  ) \\	
	& + \sum_{0 \leq s\leq t\wedge T_n}e^{-qs}[\Delta h({U}^\pi_{s-}+\Delta R_s)-h'({U}^\pi_{s-})  \Delta R_s  ],
	\end{split}
	\end{equation*}
	where we use the following notation: $\Delta \zeta(s):= \zeta(s)-\zeta(s-)$ and $\Delta h(\zeta(s)):=h(\zeta(s))-h(\zeta(s-))$ for any  process $\zeta$ with left-hand limits. 
	Rewriting the above equation leads to 
	\begin{equation*}
	\begin{split}
	e^{-q(t\wedge T_n)}h({U}^\pi_{t\wedge T_n})  -h(x)
	&=   \int_{0}^{t\wedge T_n}e^{-qs}   (\Gamma-q)h({U}^\pi_{s-})   \mathrm{d}s
	-\int_{0}^{t\wedge T_n}e^{-qs}h'({U}^\pi_{s-})\mathrm{d}{L}^\pi_s   
	 \\&+ M_{t \wedge T_n},
	 \end{split}
	\end{equation*}
	where $\{M_t: t\geq 0\}$ is a zero-mean $\mathbb{P}_x$-martingale. 
	 Hence  using  the assumptions \eqref{HJB-inequality}, \eqref{HJB-inequality_second_term} we obtain that 
	\begin{equation} \label{w_lower}
	\begin{split}
	h(x) \geq &
	\int_{0}^{t\wedge T_n}e^{-qs} \mathrm{d}\left({L}^\pi_s -\beta \sum_{0\leq z\leq s} \mathbf{1}_{\lbrace \Delta {L}^\pi_z >0\rbrace} \right) - M_{t\wedge T_n}+ e^{-q(t\wedge T_n)}h({U}^\pi_{t\wedge T_n}).
	\end{split}
	\end{equation}
	
Now, taking expectations  in \eqref{w_lower}, using the fact that $(M_{t \wedge T_n}:t\geq0 )$ is a zero-mean $\mathbb{P}_x$-martingale and $h\geq 0$,
	letting $t$ and $n$ go to infinity ($T_n\xrightarrow{n \uparrow \infty} \kappa^{r}$ $\mathbb{P}_x$-a.s.),  and the dominated convergence gives	
	\begin{align*}
	h(x) \geq &\lim_{t,n\uparrow\infty}\mathbb{E}_x \left[ \int_{0}^{t\wedge T_n}e^{-qs} \mathrm{d}\left({L}^\pi_s -\beta \sum_{0\leq z\leq s} \mathbf{1}_{\lbrace \Delta {L}^\pi_z >0\rbrace} \right)
	- M_{t\wedge T_n}+ e^{-q(t\wedge T_n)}h({U}^\pi_{t\wedge T_n})\right] \\
	\geq &  \mathbb{E}_x \left[ \int_{0}^{\kappa^{r}}e^{-qs} \mathrm{d}\left({L}^\pi_s -\beta \sum_{0\leq z\leq s} \mathbf{1}_{\lbrace \Delta {L}^\pi_z >0\rbrace} \right) + \lim_{t,n\uparrow\infty} e^{-q(t\wedge T_n)}h({U}^\pi_{t\wedge T_n})\right]
	 \geq v_\pi(x),
	\end{align*}
which completes the proof.
	\end{proof}

\begin{remark}\label{smoothness}
The lemma presented below requires 
some smoothness on the NPV of a $(c_1; c_2)$ policy. In the view of Proposition \ref{optimal_cases} it means that some smoothness conditions on the scale function $V^{(q)}$ are required. 
We will call the scale function $V^{(q)}$ sufficiently smooth if $W^{(q)} \in C^1(0,\infty)$ when $X$ is of bounded
variation. From Theorem 2.9 of \cite{KRS2008} one can see that a necessary and sufficient condition for this is that the L\'evy measure
has no atoms. When X is of unbounded variation
we call the scale function $V^{(q)}$ sufficiently smooth if $W^{(q)} \in C^1(0,\infty)$ and $W^{(q)'}$
is absolutely continuous on $(0,\infty)$ with a density which is bounded on sets of
the form $[1/n, n]$, $n \geq  1$. Moreover, in Theorem 2.6 of \cite{KRS2008} it is proved that $W^{(q)} \in C^2(0,\infty)$ if the Gaussian coefficient $\sigma$ is strictly positive. Note that the term sufficiently smooth is used here in a slightly weaker sense (which is  explained in detail in the Lemma below). 
 
\end{remark}

\begin{lemma} \label{V_HJB}
If $V^{(q)}$ is sufficiently smooth and fulfils \eqref{HJB-inequality} and \eqref{HJB-inequality_second_term}, then 
$v^{\kappa^{r}}_{c_1^{*}, c_2^{*}}=v_{*}$ for almost every $x \in \mathbb{R}$. 
\end{lemma}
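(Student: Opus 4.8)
The plan is to apply the Verification Lemma to the admissible strategy $\hat\pi = \pi_{c_{1}^{*}, c_{2}^{*}}$, whose value function is $v_{\hat\pi} = v^{\kappa^{r}}_{c_{1}^{*}, c_{2}^{*}}$. First I would record that $\pi_{c_{1}^{*}, c_{2}^{*}}$ is admissible: since every $(c_{1}^{*}, c_{2}^{*}) \in C^{*}$ satisfies $c_{1}^{*} \geq 0$, each dividend payment brings the controlled process $U^{c_{1}^{*}, c_{2}^{*}}$ down to the level $c_{1}^{*} \geq 0$, so it never enters the red zone through a jump of $L^{c_{1}^{*}, c_{2}^{*}}$ and \eqref{red zone} holds. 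It then remains to verify, for $h := v^{\kappa^{r}}_{c_{1}^{*}, c_{2}^{*}}$, the three hypotheses of the Verification Lemma: the required smoothness, the HJB-type inequality \eqref{HJB-inequality}, and the jump inequality \eqref{HJB-inequality_second_term}.

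The jump inequality \eqref{HJB-inequality_second_term} is already in hand: Lemma \ref{inequality} gives $v^{\kappa^{r}}_{c_{1}^{*}, c_{2}^{*}}(x) - v^{\kappa^{r}}_{c_{1}^{*}, c_{2}^{*}}(y) \geq x - y - \beta$ for all $x \geq y \geq 0$, and since by admissibility dividends are only paid from nonnegative positions, this is exactly the instance of \eqref{HJB-inequality_second_term} used to bound the dividend-jump terms in the proof of the Verification Lemma. The inequality \eqref{HJB-inequality} is the standing hypothesis of the present Lemma, read through the explicit representation of Proposition \ref{optimal_cases}: on $(-\infty, c_{2}^{*}]$ one has $h = V^{(q)} / V^{(q)\prime}(c_{2}^{*})$, so $(\Gamma - q) h \leq 0$ there is equivalent to the assumed inequality on $V^{(q)}$, the positive factor $1/V^{(q)\prime}(c_{2}^{*})$ being harmless; on $(c_{2}^{*}, \infty)$ the function $h$ is affine with unit slope, and the corresponding inequality is likewise covered by the hypothesis.

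For the smoothness requirement I would again use Proposition \ref{optimal_cases}: off the single gluing point $c_{2}^{*}$, the function $h$ is either a positive multiple of $V^{(q)}$ or affine, so its regularity is inherited from that of $V^{(q)}$. The sufficiently smooth assumption, made precise in Remark \ref{smoothness}, guarantees via Proposition \ref{Derivative of w exist} and differentiation under the integral sign in $V^{(q)}(x) = \int_{0}^{\infty} w^{(q)}(x; -z) \frac{z}{r} \mathbb{P}(X_{r} \in dz)$ that $V^{(q)}$ is $C^{1}$ in the bounded-variation case, and $C^{2}$ in the unbounded-variation case, away from the single point $x = 0$ where scale functions of bounded-variation processes carry their derivative discontinuity. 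At $c_{2}^{*}$ the defining relation \eqref{Derivative of v at c2} forces the left derivative $V^{(q)\prime}(c_{2}^{*})/V^{(q)\prime}(c_{2}^{*}) = 1$ to match the unit slope of the affine piece, so $h$ is $C^{1}$ across $c_{2}^{*}$; hence the first derivative ($X$ of bounded variation) or the second derivative ($X$ of unbounded variation) of $h$ has at most the finitely many single discontinuities tolerated by the Verification Lemma.

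With admissibility and the three hypotheses in place, the Verification Lemma yields $v^{\kappa^{r}}_{c_{1}^{*}, c_{2}^{*}} = v_{*}$ for almost every $x \in \mathbb{R}$, and that $\pi_{c_{1}^{*}, c_{2}^{*}}$ is optimal. The main obstacle I anticipate is the smoothness bookkeeping rather than any inequality: one must justify interchanging differentiation with the $\mathbb{P}(X_{r} \in dz)$-integration defining $V^{(q)}$, and confirm that the derivative discontinuity of $V^{(q)}$ at $x = 0$ together with the single corner at $c_{2}^{*}$ (removed by the matching condition) stays within the finite-number-of-discontinuities tolerance of the Verification Lemma; the HJB inequality itself is imposed as a hypothesis and so is not reproved here.
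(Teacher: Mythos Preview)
Your overall plan---apply the Verification Lemma to $\hat\pi=\pi_{c_1^*,c_2^*}$, get the jump condition \eqref{HJB-inequality_second_term} from Lemma~\ref{inequality}, and check smoothness via the representation in Proposition~\ref{optimal_cases}---matches the paper's strategy. The difference lies in how the two handle \eqref{HJB-inequality} and the ``almost every'' qualifier.

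You treat \eqref{HJB-inequality} purely as a standing hypothesis and spend your effort on smoothness bookkeeping. The paper instead \emph{proves} that $(\Gamma-q)v^{\kappa^r}_{c_1^*,c_2^*}=0$ on $(-\infty,c_2^*)$ by showing that $\bigl(e^{-q(t\wedge\kappa^r\wedge\kappa_c^+)}V^{(q)}(R_{t\wedge\kappa^r\wedge\kappa_c^+})\bigr)_{t\ge0}$ is a $\mathbb P_x$-martingale, via \eqref{Two sided exit problem Parisian} and the strong Markov property. This martingale argument is the substantive content of the paper's proof; it demonstrates that the left half of the HJB hypothesis is automatic and need not be checked case by case, which is then exploited in Theorem~\ref{optimality theorem}. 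For the ``almost every'' clause, the paper does not simply count discontinuities as you do: it argues that the failure of $(\Gamma-q)v^{\kappa^r}_{c_1^*,c_2^*}$ to be defined at $x=0$ (bounded variation) and at $x=c_2^*$ (when $\sigma>0$) is harmless because what the Verification Lemma really needs is $\int_0^t e^{-qs}(\Gamma-q)v^{\kappa^r}_{c_1^*,c_2^*}(\tilde U^{c_1^*,c_2^*}_s)\,ds\le0$ a.s., and this is obtained via the occupation formula for semimartingale local time (citing Lemma~6 of \cite{L08} for $\sigma>0$, and the pure-jump quadratic-variation structure in the bounded-variation case). Your approach is logically adequate given the literal hypotheses of the lemma, but the paper's route explains \emph{why} those hypotheses are met on $(-\infty,c_2^*)$ and gives a sharper justification of the exceptional-set statement.
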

\begin{proof}
From Lemma \ref{inequality} one can see that it is sufficient to prove that \eqref{HJB-inequality} holds. 
At first to get that $(\Gamma- q)v^{\kappa^{r}}_{c_1^{*}, c_2^{*}}= 0$, for  $x<c_2^{*},$ one can observe that from Proposition \ref{optimal_cases} (for $x<c_2^{*}$)  it is enough to show that $(e^{-q(t \wedge \kappa^r \wedge \kappa_c^+)} V^{(q)}(R_{t \wedge \kappa^r \wedge \kappa_c^+}))_{t \geq 0}$ is a $\mathbb{P}_x$-martingale.
Indeed, let $\tau:=\kappa^r \wedge \kappa_c^+$, using \eqref{Two sided exit problem Parisian} together with fact that $V^{(q)}(R_{\tau})/V^{(q)}(c)=\mathbf{1}_{\{\kappa_c^+ < \kappa^r \}}$,
one can get 
\begin{align*}
\mathbb{E}_x\left[ e^{-q\tau} V^{(q)}(R_{\tau}) | \mathcal{F}_t \right]=& \mathbf{1}_{\{t\leq \tau\}} e^{-qt} \mathbb{E}_{R_t}\left[ e^{-q\tau} V^{(q)}(R_{\tau}) \right]
+ \mathbf{1}_{\{\tau<t\}} e^{-q\tau} V^{(q)}(R_{\tau}) \\=& 
\mathbf{1}_{\{t\leq \tau\}} e^{-qt}  V^{(q)}(R_{\tau}) 
+ \mathbf{1}_{\{\tau<t\}} e^{-q\tau} V^{(q)}(R_{\tau})\\=&
e^{-q( t \wedge\tau)} V^{(q)}(R_{t \wedge\tau}). 
\end{align*}
From Proposition \ref{Derivative of w exist} the derivative of scale function $V^{(q)}$ does not exist at $0$, when $X$ is of bounded
variation. Moreover, when $\sigma>0$ the second left-derivative of $v^{\kappa^{r}}_{c_1^{*}, c_2^{*}}$ at $c_2^{*}$ does not equal zero and hence $(\Gamma-q)v^{\kappa^r }_{c_1^{*}, c_2^{*}}$ is not well defined. 
Therefore we claim that the result below holds for almost every $x \in \mathbb{R}$. Indeed, it is sufficient to show that for any $t>0$

\begin{equation}\label{U_fala}
\int_0^t e^{-q s} (\Gamma -q) v^{\kappa^{r}}_{c_1^{*}, c_2^{*}}(\tilde{U}_s^{c_1^{*}, c_2^{*}}) ds \leq 0
\end{equation}
almost surely, where $\tilde{U}^{c_1^{*}, c_2^{*}}$ is the right-continuous modification of $U^{c_1^{*}, c_2^{*}}$.
One can prove it using the occupation formula for the semi-martingale local time (see e.g. \cite{protter}, Corollary 1,
p.219). For details see Lemma 6 in \cite{L08}, where the case $\sigma>0$ was considered.
Since process of bounded variation is a quadratic pure jump semimartingale (see e.g. \cite{protter}, Theorem 26,
p.71) then \eqref{U_fala} automatically holds. 
\end{proof} 

\begin{theorem}\label{optimality theorem}
Suppose that $V^{(q)}$ is sufficiently smooth and that there exists $(c_1^{*}, c_2^{*}) \in C^{*}$
such that 
\begin{equation}\label{V_prime}
V^{(q)'}(x) \leq V^{(q)'}(y) \quad \textrm{ for all} \quad c_2^{*} \leq x \leq y. 
\end{equation}
Then the strategy $\pi_{c_1^{*}, c_2^{*}}$ is an optimal strategy for the impulse control problem.
\end{theorem}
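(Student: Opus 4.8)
The plan is to deduce the statement from Lemma \ref{V_HJB}. Since $\vq$ is assumed sufficiently smooth, it is enough to check that the candidate value function $\valueP$ of Proposition \ref{optimal_cases} satisfies the two requirements \eqref{HJB-inequality} and \eqref{HJB-inequality_second_term}. The second of these, $\valueP(x)-\valueP(y)\ge x-y-\beta$ for $x\ge y\ge 0$, is exactly Lemma \ref{inequality}, so no further work is needed there. Thus the whole content of the theorem reduces to verifying $(\Gamma-q)\valueP(x)\le 0$. First I would split $\mathbb{R}$ at $c_2^{*}$: on $(-\infty,c_2^{*})$ the equality $(\Gamma-q)\valueP=0$ has already been obtained inside the proof of Lemma \ref{V_HJB}, where $\valueP=\vq/\vqprime(c_2^{*})$ and the discounted scale function of the uncontrolled refracted process is a martingale, forcing $(\Gamma-q)\vq\equiv 0$ (on $\mathbb{R}$, letting the upper level in \eqref{Two sided exit problem Parisian} tend to infinity). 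Everything therefore comes down to proving $(\Gamma-q)\valueP(x)\le 0$ for $x>c_2^{*}$, where by Proposition \ref{optimal_cases} $\valueP$ is affine with unit slope, $\valueP(x)=x-c_2^{*}+\vq(c_2^{*})/\vqprime(c_2^{*})$.

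To exploit $(\Gamma-q)\vq\equiv 0$ I would set $g:=\vq/\vqprime(c_2^{*})$ on all of $\mathbb{R}$ and write $\eta:=\valueP-g$. Condition \eqref{V_prime} makes $\vqprime$ non-decreasing on $[c_2^{*},\infty)$, so $g$ is convex there and the affine branch of $\valueP$ is precisely the tangent to $g$ at $c_2^{*}$; consequently $\eta\equiv 0$ on $(-\infty,c_2^{*}]$, while $\eta\le 0$ and $\eta'\le 0$ on $[c_2^{*},\infty)$ with $\eta(c_2^{*})=\eta'(c_2^{*})=0$. Subtracting $(\Gamma-q)g\equiv 0$ turns the goal into $(\Gamma-q)\eta(x)\le 0$ for $x>c_2^{*}$. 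The boundary value is immediate: since $\eta$ and $\eta'$ vanish at $c_2^{*}$ and $\eta(c_2^{*}+y)=0$ for every $y<0$, only the Gaussian term of \eqref{generator} survives and gives
\[
(\Gamma-q)\valueP(c_2^{*}+)=-\tfrac12\sigma^{2}\,\vqdprime(c_2^{*})/\vqprime(c_2^{*})\le 0,
\]
the sign coming again from \eqref{V_prime}; this is also exactly the second-order mismatch at $c_2^{*}$ noted in Remark \ref{smoothness} and Lemma \ref{V_HJB} when $\sigma>0$.

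The remaining, and genuinely delicate, step is to propagate this nonpositivity to all $x>c_2^{*}$, and this is where I expect the main obstacle to sit. The difficulty is the nonlocal part of $\Gamma$: a downward jump can transport the process out of the affine region $(c_2^{*},\infty)$ into the curved region below $c_2^{*}$, where $\vqprime$ is no longer controlled by \eqref{V_prime}. Concretely, differentiating in $x$ one is left to sign
\[
-q+\int_{\{y<0:\,x+y<c_2^{*}\}}\Bigl(\tfrac{\vqprime(x+y)}{\vqprime(c_2^{*})}-1\Bigr)\Pi(dy),
\]
only jumps landing below $c_2^{*}$ contributing because $\valueP$ is affine above. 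I would control this by combining \eqref{V_prime} with the first-order optimality of $(c_1^{*},c_2^{*})$ from Proposition \ref{Prop derivative of v at c2}, i.e.\ $\vqprime(c_1^{*})=\vqprime(c_2^{*})$ (or $c_1^{*}=0$) and the chord relation \eqref{Derivative of v at c2}, to bound $\vqprime(z)$ against $\vqprime(c_2^{*})$ on the relevant range; the far tail $z\to-\infty$ is harmless because $\vqprime(z)\to 0$ there, and the residual positive part is absorbed by the $-q$ term. This would make $(\Gamma-q)\valueP$ non-increasing on $(c_2^{*},\infty)$, hence $\le 0$ throughout, its limit as $x\to\infty$ being $-\infty$.

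Finally, because $\valueP$ fails to be differentiable at $0$ in the bounded-variation case and has a second-derivative jump at $c_2^{*}$ when $\sigma>0$, the quantity $(\Gamma-q)\valueP$ is only defined almost everywhere; I would therefore phrase the last estimate through the occupation/local-time formula exactly as in Lemma 6 of \cite{L08}, so that the isolated irregular points do not affect the integrated inequality. With \eqref{HJB-inequality} established and \eqref{HJB-inequality_second_term} supplied by Lemma \ref{inequality}, Lemma \ref{V_HJB} yields $\valueP=v_{*}$ for almost every $x$ and identifies $\pi_{c_1^{*},c_2^{*}}$ as an optimal strategy.
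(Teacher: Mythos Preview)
Your overall architecture is correct and matches the paper: reduce to the HJB conditions, get \eqref{HJB-inequality_second_term} from Lemma~\ref{inequality}, get $(\Gamma-q)\valueP=0$ on $(-\infty,c_2^{*})$ from the martingale property of $\vq$, and handle the isolated non-smooth points via the occupation formula. The gap is in your treatment of $x>c_2^{*}$. Your comparison function $\eta=\valueP-g$ with $g=\vq/\vqprime(c_2^{*})$ has the \emph{wrong} signs: $\eta\le 0$ and $\eta$ non-increasing on $[c_2^{*},\infty)$ make $-q\eta(x)\ge 0$ and $\int(\eta(x-z)-\eta(x))\nu(dz)\ge 0$, so the individual terms of $(\Gamma-q)\eta$ do not cooperate. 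Your fallback---showing $(\Gamma-q)\valueP$ is non-increasing by signing $-q+\int_{\{x+y<c_2^{*}\}}(\vqprime(x+y)/\vqprime(c_2^{*})-1)\Pi(dy)$---requires a pointwise upper bound on $\vqprime$ \emph{below} $c_2^{*}$, and neither \eqref{V_prime} nor the first-order conditions of Proposition~\ref{Prop derivative of v at c2} provide one; nothing in the hypotheses prevents $\vqprime$ from being large on $(0,c_2^{*})$, so the ``absorbed by $-q$'' claim is unjustified.

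The paper (following \cite{L08}, Theorem~2) uses a different comparison that makes every term non-positive outright. For each fixed $x>c_2^{*}$ one compares with the barrier value $v_x(y)=\vq(y)/\vqprime(x)$ for $y\le x$, which also satisfies $(\Gamma-q)v_x=0$ below $x$, so $(\Gamma-q)\valueP(x)=\lim_{y\uparrow x}(\Gamma-q)d(y)$ with $d:=\valueP-v_x$. The point is that $d$ is \emph{non-negative and non-decreasing} on all of $(-\infty,x]$: below $c_2^{*}$ it equals $\vq(y)\bigl(1/\vqprime(c_2^{*})-1/\vqprime(x)\bigr)\ge 0$ (increasing in $y$), and on $[c_2^{*},x]$ one has $d'(y)=1-\vqprime(y)/\vqprime(x)\ge 0$ by \eqref{V_prime}. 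At $y\uparrow x$ this gives $d'(x-)=0$, $d''(x-)=-\vqdprime(x)/\vqprime(x)\le 0$, $-qd(x)\le 0$, and $\int(d(x-z)-d(x))\nu(dz)\le 0$, hence $(\Gamma-q)\valueP(x)\le 0$. This is why only the monotonicity of $\vqprime$ on $[c_2^{*},\infty)$ is needed, with no information below $c_2^{*}$; replacing your $g$ by the moving barrier $v_x$ is the missing idea.
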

\begin{proof}
From Lemma \ref{inequality} one can see that it is sufficient to prove that \eqref{HJB-inequality} holds. 
At first, from the proof of Lemma \ref{V_HJB} we obtain that $(\Gamma- q)v^{\kappa^{r}}_{c_1^{*}, c_2^{*}}= 0$, for  $x<c_2^{*}$.
On the other hand, if $x>c_2^{*}$ we get that $(\Gamma- q)v^{\kappa^{r}}_{c_1^{*}, c_2^{*}}\leq 0$. This follows from the Proposition 
\ref{optimal_cases}  which gives that $v^{\kappa^{r}}_{c_1^{*}, c_2^{*}}=v^{\kappa^{r}}_{c_2^{*}}$, where is the value of the barrier strategy at level
$c_2^{*}$ in the de Finetti problem and fact that
$$\lim_{y\uparrow x} (\Gamma- q)(v^{\kappa^{r}}_{ c_2^{*}}-v^{\kappa^{r}}_{x}(y))\leq 0 \quad \textrm{for $x>c_2^{*}$} .$$
The above inequality one can prove using ideas from \cite{L08}[Theorem 2] together with \eqref{V_prime}. 
\end{proof}

\section{Examples}
In this part, we will present the results concerning the numerical calculations of the optimal impulse policy $(c_1^{*}, c_2^{*})$. From Proposition \ref{first_representation} we know that when $C^{*}$ is not an empty set, then $(c_1^{*}, c_2^{*})$ need to satisfy one of the possibilities listed there. Such an observation will define the way of constructing numerical calculations. However, to even start the computations one need to know how to calculate Parisian refracted scale function. 
Therefore, we will find analytical representation for $\wq$ and $\vq$ for the linear Brownian motion and for the Cr\'amer-Lundberg process with the exponential claims. 
Moreover, we will prove that for these two processes there is a unique $(c_1, c_2)$ policy which is optimal for the impulse control problem. 

\subsection{Linear Brownian motion}\hfill\\
Let us assume that process $X$ is a linear Brownian motion, which can be represented as
\begin{equation}
\nonumber
X_{t} = \mu t + \sigma B_{t},
\end{equation}
where $\mu \in \mathbb{R}$ and $\sigma > 0$. Fix $q > 0$ and $\delta > 0$. Recall that (see, e.g. \cite{IC16})
\begin{equation}
\nonumber
\qscale (x) = \frac{2}{\sigma^2 \rho}\Bigl(e^{\rho_{2} x} - e^{-\rho_{1} x}\Bigr)
\end{equation}
\begin{equation}
\nonumber
\WWq (x) =  \frac{2}{\sigma^2 \rho^{Y}}\Bigl(e^{\rho_{2}^{Y}x}-e^{-\rho_{1}^{Y}x}\Bigr),
\end{equation}
where
\begin{equation}
\nonumber
\rho_{1} = \frac{\sqrt{\mu^{2} + 2 q \sigma^2}+\mu}{\sigma^2}, \quad \rho_{2} =  \frac{\sqrt{\mu^{2} + 2 q \sigma^2}-\mu}{\sigma^2}, \quad \rho = \rho_{1} + \rho_{2} =  \frac{2\sqrt{\mu^{2} + 2 q \sigma^2}}{\sigma^2}.
\end{equation}
and
\begin{equation}
\begin{split}
\nonumber
&\rho_{1}^{Y} = \frac{\sqrt{(\mu-\delta)^{2} + 2 q \sigma^2}+(\mu-\delta)}{\sigma^2}, \quad \rho_{2}^{Y} =  \frac{\sqrt{(\mu-\delta)^{2} + 2 q \sigma^2}-(\mu-\delta)}{\sigma^2}, \\& \rho^{Y} = \rho_{1}^{Y} + \rho_{2}^{Y} =  \frac{2\sqrt{(\mu-\delta)^{2} + 2 q \sigma^2}}{\sigma^2}.
\end{split}
\end{equation}
Our first step is to present the formula for $\wq$.

\begin{prop}\label{wq_brown}
For the linear Brownian motion the function $\wq$ is of the following form
\begin{equation}
\nonumber
\begin{split}
 \wq (x;-z) &=  \frac{\sigma^2}{2} \qscaleprime (z) \WWq (x) + \frac{\qscale (z)}{\rho^{Y}}\Bigl(\rho_{1}^{Y}e^{\rho_{2}^{Y}x}+\rho_{2}^{Y}e^{-\rho_{1}^{Y}x}\Bigr) \\& = \frac{\sigma^2}{2} \qscaleprime (z) \WWq (x)  + \frac{\qscale (z)}{2}\Bigl(e^{\rho_{2}^{Y}x}+e^{-\rho_{1}^{Y}x}\Bigr) + \frac{\mu-\delta}{2} \qscale (z) \WWq (x)
\end{split}
\end{equation}
\end{prop}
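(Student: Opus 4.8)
The plan is to evaluate the defining convolution \eqref{small w} directly. For $x \geq 0$ and $a=-z$ with $z>0$ it reads $\wq(x;-z) = \qscale(x+z) + \delta\int_0^x \WWq(x-y)\qscaleprime(y+z)\,dy$, so the whole task reduces to computing one explicit integral and recombining the pieces (the regime $x<0$ is immediate from \eqref{small w}). First I would substitute the closed forms $\WWq(x-y) = \frac{2}{\sigma^2\rho^{Y}}\bigl(e^{\rho_{2}^{Y}(x-y)} - e^{-\rho_{1}^{Y}(x-y)}\bigr)$ and $\qscaleprime(y+z) = \frac{2}{\sigma^2\rho}\bigl(\rho_{2} e^{\rho_{2}(y+z)} + \rho_{1} e^{-\rho_{1}(y+z)}\bigr)$ into the integrand. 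This expands it into four products of the form $e^{\alpha(x-y)}e^{\beta(y+z)}$ with $\alpha\in\{\rho_{2}^{Y},-\rho_{1}^{Y}\}$ and $\beta\in\{\rho_{2},-\rho_{1}\}$; each integrates elementarily over $[0,x]$ to $e^{\beta z}\frac{e^{\alpha x}-e^{\beta x}}{\alpha-\beta}$ (all four denominators $\alpha-\beta$ are nonzero precisely because $\delta>0$). After integration I obtain a finite linear combination of the four exponentials $e^{\rho_{2}x},e^{-\rho_{1}x},e^{\rho_{2}^{Y}x},e^{-\rho_{1}^{Y}x}$ whose coefficients are explicit functions of $z$.

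The crucial structural point, and the step carrying the real content, is that the two ``$X$-type'' exponentials $e^{\rho_{2}x}$ and $e^{-\rho_{1}x}$ must disappear, since they are absent from the claimed answer: the $e^{\rho_{2}x}$ produced by the integral has to cancel exactly the $e^{\rho_{2}x}$ carried by $\qscale(x+z)$, and likewise for $e^{-\rho_{1}x}$. Collecting coefficients, this cancellation is equivalent to the two identities
\begin{equation*}
(\rho_{2}-\rho_{2}^{Y})(\rho_{2}+\rho_{1}^{Y}) = -\frac{2\delta\rho_{2}}{\sigma^2}, \qquad (\rho_{1}+\rho_{2}^{Y})(\rho_{1}^{Y}-\rho_{1}) = -\frac{2\delta\rho_{1}}{\sigma^2}.
\end{equation*}
I would verify these using that $\rho_{2}$ and $-\rho_{1}$ are the two roots of $\psi(\theta)=q$, i.e.\ of $\tfrac12\sigma^2\theta^2+\mu\theta-q=0$ (so $\rho_{2}^2 = \frac{2q}{\sigma^2}-\frac{2\mu}{\sigma^2}\rho_{2}$ and $\rho_{1}^2 = \frac{2q}{\sigma^2}+\frac{2\mu}{\sigma^2}\rho_{1}$), together with the $Y$-relations $\rho_{1}^{Y}\rho_{2}^{Y}=\frac{2q}{\sigma^2}$ and $\rho_{1}^{Y}-\rho_{2}^{Y}=\frac{2(\mu-\delta)}{\sigma^2}$; substituting these collapses each product to the stated right-hand side.

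With the $X$-exponentials gone, only a combination of $e^{\rho_{2}^{Y}x}$ and $e^{-\rho_{1}^{Y}x}$ survives, and I would collect their $z$-dependent coefficients. Using the same two identities once more (now read as $\frac{2\delta\rho_{2}}{\sigma^2}=-(\rho_{2}-\rho_{2}^{Y})(\rho_{2}+\rho_{1}^{Y})$, etc.), the $e^{\rho_{2}^{Y}x}$ coefficient simplifies to $\frac{2}{\sigma^2\rho\rho^{Y}}\bigl[(\rho_{2}+\rho_{1}^{Y})e^{\rho_{2}z}+(\rho_{1}-\rho_{1}^{Y})e^{-\rho_{1}z}\bigr]$ and the $e^{-\rho_{1}^{Y}x}$ coefficient to the analogous expression. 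A direct comparison shows these are exactly the $e^{\rho_{2}^{Y}x}$/$e^{-\rho_{1}^{Y}x}$ content of the first displayed form $\frac{\sigma^2}{2}\qscaleprime(z)\WWq(x) + \frac{\qscale(z)}{\rho^{Y}}\bigl(\rho_{1}^{Y}e^{\rho_{2}^{Y}x}+\rho_{2}^{Y}e^{-\rho_{1}^{Y}x}\bigr)$, which establishes the first equality.

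Finally, to pass to the second displayed form I would split $\rho_{1}^{Y}=\frac{\rho^{Y}}{2}+\frac{\rho_{1}^{Y}-\rho_{2}^{Y}}{2}$ and $\rho_{2}^{Y}=\frac{\rho^{Y}}{2}-\frac{\rho_{1}^{Y}-\rho_{2}^{Y}}{2}$, so that $\frac{\rho_{1}^{Y}e^{\rho_{2}^{Y}x}+\rho_{2}^{Y}e^{-\rho_{1}^{Y}x}}{\rho^{Y}}=\frac12\bigl(e^{\rho_{2}^{Y}x}+e^{-\rho_{1}^{Y}x}\bigr)+\frac{\rho_{1}^{Y}-\rho_{2}^{Y}}{2\rho^{Y}}\bigl(e^{\rho_{2}^{Y}x}-e^{-\rho_{1}^{Y}x}\bigr)$; since $\rho_{1}^{Y}-\rho_{2}^{Y}=\frac{2(\mu-\delta)}{\sigma^2}$ and $e^{\rho_{2}^{Y}x}-e^{-\rho_{1}^{Y}x}=\frac{\sigma^2\rho^{Y}}{2}\WWq(x)$, the odd part becomes precisely $\frac{\mu-\delta}{2}\qscale(z)\WWq(x)$ after multiplying by $\qscale(z)$, giving the second equality. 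The mathematical substance is the pair of root identities forcing the $X$-exponentials to vanish; the main obstacle is simply the bookkeeping discipline needed to track the four exponential types through the integration without sign or coefficient errors.
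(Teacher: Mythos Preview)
Your proposal is correct and follows essentially the same approach as the paper: direct evaluation of the convolution \eqref{small w} using the explicit Brownian scale functions, with the key simplification supplied by a pair of algebraic identities linking the $\rho$- and $\rho^{Y}$-parameters. Your product identities $(\rho_{2}-\rho_{2}^{Y})(\rho_{2}+\rho_{1}^{Y}) = -\tfrac{2\delta\rho_{2}}{\sigma^2}$ and $(\rho_{1}+\rho_{2}^{Y})(\rho_{1}^{Y}-\rho_{1}) = -\tfrac{2\delta\rho_{1}}{\sigma^2}$ are exactly the paper's partial-fraction relations \eqref{Brownian Motion - parameters relation 1} after clearing denominators and dividing through by $\rho^{Y}$, so the two arguments coincide (yours simply spells out the cancellation of the $X$-exponentials more explicitly).
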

\begin{proof}
The proof contains simple calculations which involves the following relations between parameters of $\qscale$ and $\qscaleY$
\begin{equation}\label{Brownian Motion - parameters relation 1}
\begin{split}
\frac{\rho_{2}}{\rho_{2}-\rho_{2}^{Y}} - \frac{\rho_{2}}{\rho_{1}^{Y}+\rho_{2}} = -\frac{\sigma^2\rho^{Y}}{2\delta}, \quad \quad \quad  \frac{\rho_{1}}{\rho_{2}^{Y}+\rho_{1}} + \frac{\rho_{1}}{\rho_{1}^{Y} - \rho_{1}} = -\frac{\sigma^2 \rho^{Y}}{2\delta}
\end{split}
\end{equation}
\end{proof}

Now we will consider the formula for the function $\vq$.
\begin{prop}
In the linear Brownian motion setting function $\vq$ is of the following form.
\begin{itemize}
\item For $x \geq 0$ 
\begin{equation}
\begin{split}
\nonumber
\vq (x) &= \frac{\sigma^2}{2} \WWq (x) \Bigl[ \frac{2}{\sqrt{2\pi \sigma^2 r}} e^{\frac{-r\mu^2}{2\sigma^2}}+ \rho_{2}e^{qr} - \rho e^{qr} \Phi\Bigl(\frac{-r\sqrt{\mu^2 +2q\sigma^2}}{\sigma\sqrt{r}}\Bigr)\Bigr] \\&+ \frac{e^{qr}}{\rho^{Y}} \Bigl(\rho_{1}^{Y} e^{\rho_{2}^{Y}x}+ \rho_{2}^{Y}e^{-\rho_{1}^{Y}x}\Bigr)
\end{split}
\end{equation}
\item For $x < 0$
\begin{equation}
\begin{split}
\nonumber
\vq (x) =& e^{qr}\Bigl(e^{\rho_{2} x}\Bigl[1-\Phi\Bigl(\frac{-x-r\sqrt{\mu^2+2q\sigma^2}}{\sigma \sqrt{r}}\Bigr)\Bigr] \\& + e^{-\rho_{1} x}\Bigl[1-\Phi\Bigl(\frac{-x+r\sqrt{\mu^2+2q\sigma^2}}{\sigma\sqrt{r}}\Bigl)\Bigr]\Bigr),
\end{split}
\end{equation}
\end{itemize}
where $\Phi$ is the cumulative distribution function of the standard normal variable.
\end{prop}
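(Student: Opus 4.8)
The plan is to insert the closed forms into the definition $\vq(x)=\int_{0}^{\infty} w^{(q)}(x;-z)\frac{z}{r}\p(X_{r}\in dz)$ and to evaluate the resulting Gaussian integrals, using that for the linear Brownian motion $X_{r}\sim N(\mu r,\sigma^{2}r)$ with density $p_{r}(z)=\frac{1}{\sqrt{2\pi\sigma^{2}r}}\exp(-\tfrac{(z-\mu r)^{2}}{2\sigma^{2}r})$. Throughout I would exploit that $\psi(\rho_{2})=\psi(-\rho_{1})=q$ (that is, $\rho_{2}$ and $-\rho_{1}$ are exactly the two roots of $\psi(\theta)=q$) together with $\rho=2A/\sigma^{2}$, where $A:=\sqrt{\mu^{2}+2q\sigma^{2}}$.

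For $x\geq 0$ I would start from the first displayed identity of Proposition \ref{wq_brown}, $w^{(q)}(x;-z)=\frac{\sigma^{2}}{2}\qscaleprime(z)\WWq(x)+\frac{\qscale(z)}{\rho^{Y}}(\rho_{1}^{Y}e^{\rho_{2}^{Y}x}+\rho_{2}^{Y}e^{-\rho_{1}^{Y}x})$, whose decisive feature is that the $x$-dependence factors out of the $z$-integration. Thus $\vq(x)$ becomes a linear combination of $\WWq(x)$ and $(\rho_{1}^{Y}e^{\rho_{2}^{Y}x}+\rho_{2}^{Y}e^{-\rho_{1}^{Y}x})/\rho^{Y}$ with coefficients $\frac{\sigma^{2}}{2}\int_{0}^{\infty}\qscaleprime(z)\frac{z}{r}\p(X_{r}\in dz)$ and $\int_{0}^{\infty}\qscale(z)\frac{z}{r}\p(X_{r}\in dz)$. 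The second coefficient is already known to equal $e^{qr}$ by \eqref{int wq and Xr in dz}, producing at once the second summand of the claimed formula. For the first I would write $\qscaleprime(z)=\frac{2}{\sigma^{2}\rho}(\rho_{2}e^{\rho_{2}z}+\rho_{1}e^{-\rho_{1}z})$ and complete the square in each integral $\int_{0}^{\infty}e^{\alpha z}\frac{z}{r}p_{r}(z)\,dz$; since $\psi(\alpha)=q$ for $\alpha\in\{\rho_{2},-\rho_{1}\}$, the exponential prefactor collapses to $e^{qr}$ and one is left with $\frac{e^{qr}}{r}\e[Z_{\alpha}\mathbf{1}_{\{Z_{\alpha}>0\}}]$ for $Z_{\alpha}\sim N(r(\mu+\alpha\sigma^{2}),\sigma^{2}r)$. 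Applying $\e[Z\mathbf{1}_{\{Z>0\}}]=m\Phi(m/s)+s\phi(m/s)$ and simplifying with $\rho=2A/\sigma^{2}$ should reproduce the bracket $\frac{2}{\sqrt{2\pi\sigma^{2}r}}e^{-r\mu^{2}/(2\sigma^{2})}+\rho_{2}e^{qr}-\rho e^{qr}\Phi(\cdot)$.

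For $x<0$ I would instead use $w^{(q)}(x;-z)=\qscale(x+z)$ from \eqref{small w}, noting that $\qscale$ vanishes on the negatives so the integral effectively runs over $z>-x$. Splitting $\qscale(x+z)=\frac{2}{\sigma^{2}\rho}(e^{\rho_{2}(x+z)}-e^{-\rho_{1}(x+z)})$ and again completing the square converts each piece into a truncated Gaussian integral $\int_{-x}^{\infty}e^{\alpha z}\frac{z}{r}p_{r}(z)\,dz=\frac{e^{qr}}{r}\e[Z_{\alpha}\mathbf{1}_{\{Z_{\alpha}>-x\}}]$, and here $\e[Z\mathbf{1}_{\{Z>c\}}]=m\Phi((m-c)/s)+s\phi((c-m)/s)$ contributes both a mean term and a density term. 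The mean terms assemble directly into the two $\Phi$-summands of the target after rewriting $\Phi(u)=1-\Phi(-u)$.

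The step I expect to be the crux---and the only genuinely nonobvious one---is that the two density ($\phi$) terms must cancel, since the stated formula contains no Gaussian density. I would verify this via the identity $e^{\rho_{2}x}\phi\!\left(\frac{-x-rA}{\sigma\sqrt{r}}\right)=e^{-\rho_{1}x}\phi\!\left(\frac{-x+rA}{\sigma\sqrt{r}}\right)$, which holds because in both exponents the linear-in-$x$ contributions reduce to $-\mu x/\sigma^{2}$ once one uses $\rho_{2}=(A-\mu)/\sigma^{2}$ and $\rho_{1}=(A+\mu)/\sigma^{2}$; as these two density terms enter $\vq(x)$ with opposite signs, they annihilate and leave exactly the claimed $\Phi$-only expression. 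The remaining work is purely the bookkeeping of Gaussian moments and the algebraic simplifications via $\rho=2A/\sigma^{2}$ and $\psi(\rho_{2})=\psi(-\rho_{1})=q$.
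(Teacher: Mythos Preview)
Your proposal is correct and follows essentially the same route as the paper: for $x\geq 0$ both use the factorised form of $w^{(q)}(x;-z)$ from Proposition~\ref{wq_brown} together with \eqref{int wq and Xr in dz}, and for $x<0$ both substitute $w^{(q)}(x;-z)=\qscale(x+z)$ and integrate the Gaussian density. The paper compresses both Gaussian computations into ``after some calculations'', whereas you spell out the mechanism---completing the square via $\psi(\rho_{2})=\psi(-\rho_{1})=q$, the truncated-normal moment identity, and the cancellation of the $\phi$-terms through $e^{\rho_{2}x}\phi\bigl(\tfrac{-x-rA}{\sigma\sqrt{r}}\bigr)=e^{-\rho_{1}x}\phi\bigl(\tfrac{-x+rA}{\sigma\sqrt{r}}\bigr)$---so your write-up is in fact more complete than the paper's own proof.
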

\begin{proof}
We will separate our proof into two parts
\begin{itemize}
\item For $x \geq 0$. Using formula for the $\wq$ from the last proposition one can get
\begin{equation}
\nonumber
\begin{split}
\vq (x) &=  \vqint = \frac{\sigma^2}{2} \WWq (x) \int_{0}^{\infty} \qscaleprime (z) \frac{z}{r}\mathbb{P}(X_{r} \in dz)  \\& +  \frac{\Bigl(\rho_{1}^{Y} e^{\rho_{2}^{Y}x} + \rho_{2}^{Y} e^{-\rho_{1}^{Y}}\Bigr)}{\rho^{Y}} \int_{0}^{\infty} \qscale (z) \frac{z}{r} \mathbb{P}(X_{r} \in dz). 
\end{split}
\end{equation}
Hence, one need to calculate two integrals
\begin{equation}\nonumber
\int_{0}^{\infty} \qscale (z) \frac{z}{r} \mathbb{P}(X_{r} \in dz)
\end{equation}
and
\begin{equation}\nonumber
 \int_{0}^{\infty} \qscaleprime (z) \frac{z}{r}\mathbb{P}(X_{r} \in dz).
\end{equation}
For the first integral, one can use $(\ref{int wq and Xr in dz})$. However, for the second integral we need to do some calculations. One can get the following
\begin{equation}
\nonumber
\begin{split}
&\int_{0}^{\infty} \qscaleprime (z) \frac{z}{r}\mathbb{P}(X_{r} \in dz)= \frac{2}{\sqrt{2\pi \sigma^2 r}}e^{\frac{-r\mu^2}{2\sigma^2}} + \rho_{2} e^{qr} - \rho e^{qr}\Phi\Bigl(\frac{-r\sqrt{\mu^2 + 2q\sigma^2}}{\sigma\sqrt{r}}\Bigr).
\end{split}
\end{equation}
Therefore, for $x \geq 0$ the formula for the Parisian refracted scale function is of the following form
\begin{equation}
\begin{split}
\nonumber
\vq (x) &= \frac{\sigma^2}{2} \WWq (x) \Bigl[ \frac{2}{\sqrt{2\pi \sigma^2 r}} e^{\frac{-r\mu^2}{2\sigma^2}}+ \rho_{2}e^{qr} - \rho e^{qr} \Phi\Bigl(\frac{-r\sqrt{\mu^2 +2q\sigma^2}}{\sigma\sqrt{r}}\Bigr)\Bigr] \\&+ \frac{e^{qr}}{\rho^{Y}} \Bigl(\rho_{1}^{Y} e^{\rho_{2}^{Y}x}+ \rho_{2}^{Y}e^{-\rho_{1}^{Y}x}\Bigr)
\end{split}
\end{equation}
\item Let us assume that $x < 0$. Then
\begin{equation}
\begin{split}
\nonumber
\vq (x) &= \int_{0}^{\infty} \qscale (x+z) \frac{z}{r} \mathbb{P}(X_{r} \in dz) \\&= \int_{-x}^{\infty} \frac{2}{\sigma^2\rho} \Bigl(e^{\rho_{2}(x+z)} - e^{-\rho_{1} (x+z)}\Bigr) \frac{z}{r} \frac{1}{\sqrt{2\pi \sigma^2 r}} e^{\frac{-(z-\mu r)^2}{2\sigma^2 r}} dz
\end{split}
\end{equation}
Therefore, after some calculations one can get that
\begin{equation}
\begin{split}
\nonumber
\vq (x) &= e^{qr}\Bigl(e^{\rho_{2} x}\Bigl[1-\Phi\Bigl(\frac{-x-r\sqrt{\mu^2+2q\sigma^2}}{\sigma \sqrt{r}}\Bigr)\Bigr] \\&+ e^{-\rho_{1} x}\Bigl[1-\Phi\Bigl(\frac{-x+r\sqrt{\mu^2+2q\sigma^2}}{\sigma\sqrt{r}}\Bigl)\Bigr]\Bigr)
\end{split}
\end{equation}
\end{itemize}
\end{proof}

\begin{prop}
Fix any $q > 0$ and $z > 0$, there  exist a constant $a^{*}_R \geq 0$ such that the function $\wqprime(x;-z)$ is decreasing on $(0,a^{*}_R)$ and is increasing on $(a^{*}_R,\infty)$. This also implies the same for $\vqprime(x)$.
\end{prop}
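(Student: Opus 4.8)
The plan is to exploit the fully explicit form of $\wq(x;-z)$ from Proposition \ref{wq_brown} and reduce everything to the elementary behaviour of a two-term exponential. First I would differentiate the expression for $\wq(\cdot;-z)$ on $(0,\infty)$, using $\WWqprime(x)=\frac{2}{\sigma^2\rho^{Y}}\big(\rho_2^{Y}e^{\rho_2^{Y}x}+\rho_1^{Y}e^{-\rho_1^{Y}x}\big)$, and collect the coefficients of the two surviving exponentials. A short computation (using $\tfrac{\rho^{Y}}{2}+\tfrac{\mu-\delta}{\sigma^2}=\rho_1^{Y}$ and $-\tfrac{\rho^{Y}}{2}+\tfrac{\mu-\delta}{\sigma^2}=-\rho_2^{Y}$) should give
\[
\wqprime(x;-z)=A(z)\,e^{\rho_2^{Y}x}+B(z)\,e^{-\rho_1^{Y}x},
\]
with
\[
A(z)=\frac{\rho_2^{Y}}{\rho^{Y}}\big(\qscaleprime(z)+\rho_1^{Y}\qscale(z)\big),\qquad B(z)=\frac{\rho_1^{Y}}{\rho^{Y}}\big(\qscaleprime(z)-\rho_2^{Y}\qscale(z)\big).
\]
Since $\rho_1^{Y},\rho_2^{Y},\rho^{Y}>0$ and $\qscale(z),\qscaleprime(z)>0$ for $z>0$, the coefficient $A(z)$ is strictly positive; the sign of $B(z)$, by contrast, need not be fixed.

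The key step is then an elementary monotonicity lemma for functions of the form $g(x)=A e^{\alpha x}+Be^{-\gamma x}$ with $A>0$ and $\alpha,\gamma>0$. If $B\le 0$ then $g'(x)=A\alpha e^{\alpha x}-B\gamma e^{-\gamma x}>0$ everywhere, so $g$ is strictly increasing and one takes $a_R^{*}=0$. If $B>0$ then $g''>0$, so $g'$ is strictly increasing with $g'(x)\to-\infty$ as $x\to-\infty$ and $g'(x)\to+\infty$ as $x\to+\infty$; hence $g'$ has a unique zero $x_0$, and with $a_R^{*}:=\max(x_0,0)\ge 0$ the function $g$ is decreasing on $(0,a_R^{*})$ and increasing on $(a_R^{*},\infty)$. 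Applying this to $g=\wqprime(\cdot;-z)$ (with $\alpha=\rho_2^{Y}$, $\gamma=\rho_1^{Y}$) gives the first assertion.

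For $\vqprime$ I would use that the exponents $\rho_1^{Y}$ and $\rho_2^{Y}$ do not depend on the integration variable $z$. Differentiating the explicit expression for $\vq$ established above (equivalently, differentiating under the integral in $\vq(x)=\int_0^{\infty}\wq(x;-z)\frac{z}{r}\mathbb{P}(X_r\in dz)$, justified by the Gaussian tails) yields
\[
\vqprime(x)=\bar A\,e^{\rho_2^{Y}x}+\bar B\,e^{-\rho_1^{Y}x},\qquad \bar A=\int_0^{\infty}A(z)\,\frac{z}{r}\mathbb{P}(X_r\in dz),
\]
and since $A(z)>0$ for every $z>0$ while $\frac{z}{r}\mathbb{P}(X_r\in dz)$ is a positive measure, $\bar A>0$. (One may also read off $\bar A>0$ directly: the bracket in the formula for $\vq$ equals $\int_0^{\infty}\qscaleprime(z)\frac{z}{r}\mathbb{P}(X_r\in dz)>0$ by \eqref{int wq and Xr in dz} and the positivity of $\qscaleprime$.) The same lemma with $(A,B)=(\bar A,\bar B)$ then produces a constant $a_R^{*}\ge 0$ with the required monotonicity for $\vqprime$.

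The main obstacle is conceptual rather than computational: one is tempted to prove a global convexity statement, but $B(z)$ is genuinely negative for large $z$ (indeed $\qscaleprime(z)/\qscale(z)\to\rho_2<\rho_2^{Y}$ as $z\to\infty$), so $\wqprime(\cdot;-z)$ need not be U-shaped at all — it may be strictly increasing. The case split on the sign of $B$ (resp.\ $\bar B$) is exactly what makes the statement correct, with the degenerate case absorbed into the choice $a_R^{*}=0$. The only points requiring a little care are the identity $\tfrac{\rho^{Y}}{2}+\tfrac{\mu-\delta}{\sigma^{2}}=\rho_1^{Y}$ used to simplify the coefficients, and the interchange of differentiation and integration for $\vqprime$.
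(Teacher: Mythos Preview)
Your argument is correct and follows essentially the same route as the paper: both proofs use the explicit form of $\wq(\cdot;-z)$ from Proposition~\ref{wq_brown}, reduce to a two-term exponential with positive leading coefficient, and case-split on the sign of the remaining coefficient (the paper does this for $w^{(q)''}$, you for $\wqprime$, which is a cosmetic difference). Your treatment of $\vqprime$ is in fact slightly more careful than the paper's one-line ``directly from its definition'': you observe that the exponents $\rho_1^{Y},\rho_2^{Y}$ are independent of $z$, so integration in $z$ preserves the two-term exponential structure and the same elementary lemma applies---this is exactly the point that needs to be made, since the $z$-dependent thresholds $a_R^{*}(z)$ do not a priori yield a single threshold after integration.
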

\begin{proof}
To prove the thesis, we will examine the second derivative with respect to $x$ of $\wqprime(x;-z)$. 
Indeed, using Proposition \ref{wq_brown} and the above explicit formula for the scale function $\WWq$, we get 
\begin{equation}
\nonumber
\begin{split}
 w^{(q)''} (x;-z) &=  \frac{(\rho_{2}^{Y})^2 }{\rho^{Y}}e^{\rho_{2}^{Y}x}\Bigl(  \qscaleprime (z)+\rho_{1}^{Y} W^{(q)}(z)\Bigr) - \frac{(\rho_{1}^{Y})^2 }{\rho^{Y}}e^{-\rho_{1}^{Y}x}\Bigl(  \qscaleprime (z)-\rho_{2}^{Y} W^{(q)}(z)\Bigr) \\&= \frac{(\rho_{2}^{Y})^2 }{\rho^{Y}}e^{\rho_{2}^{Y}x}A - \frac{(\rho_{1}^{Y})^2 }{\rho^{Y}}e^{-\rho_{1}^{Y}x}B,
\end{split}
\end{equation}
where $\rho_{1}^{Y}, \rho_{2}^{Y}>0$. 
The constant $A$ is strictly positive, because the scale function $W^{(q)}$ is increasing and strictly positive on whole positive half-line. 
Now, if $B<0$, then function $w^{(q)''} (x;-z)$ is positive for all $x,z>0$ and hence $a^{*}_R = 0$. 
If $B>0$, then  $w^{(q)''} (x;-z)$ is an increasing and unbounded function of $x$ as a sum of two increasing exponential functions. 
This completes the proof for $w^{(q)}$. For $\vqprime(x)$ we get the thesis directly from its definition.  
\end{proof}

\begin{theorem} 
For the linear Brownian motion model there is a
unique $(c_1; c_2)$ policy which is optimal for the impulse control problem. 
\end{theorem}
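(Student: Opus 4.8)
The plan is to reduce optimality within the class of $(c_{1},c_{2})$ policies to the minimisation of $g$, and then to use the $U$-shape of $\vqprime$ from the preceding proposition to locate the minimiser uniquely. By Proposition \ref{first_representation}, for $x\le c_{2}$ one has $v^{\kappa^{r}}_{c_{1},c_{2}}(x)=\vq(x)/g(c_{1},c_{2})$, so the best $(c_{1},c_{2})$ policy is the one minimising $g$ on $dom(g)$; a policy whose $g$-value is not minimal has strictly smaller NPV and cannot be optimal. For the linear Brownian motion the scale functions are smooth, so $\qscale\in C^{1}(0,\infty)$, and Proposition \ref{Prop derivative of v at c2} guarantees that $C^{*}\neq\emptyset$ and that every $(c_{1}^{*},c_{2}^{*})\in C^{*}$ satisfies the first-order relation \eqref{Derivative of v at c2}, i.e. $\vqprime(c_{2}^{*})=g(c_{1}^{*},c_{2}^{*})$.

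Next I would show that $c_{2}^{*}$ always lies on the increasing branch of $\vqprime$. Set $\ell^{*}:=g(c_{1}^{*},c_{2}^{*})=\min g$. The mean value theorem applied to $\vq$ on $[c_{1}^{*},c_{2}^{*}]$ yields $\xi\in(c_{1}^{*},c_{2}^{*})$ with $\vqprime(\xi)=\frac{\vq(c_{2}^{*})-\vq(c_{1}^{*})}{c_{2}^{*}-c_{1}^{*}}$; since $\vq$ is increasing and $\beta>0$, $\ell^{*}=\frac{\vq(c_{2}^{*})-\vq(c_{1}^{*})}{c_{2}^{*}-c_{1}^{*}-\beta}>\vqprime(\xi)$, whence $\vqprime(c_{2}^{*})>\vqprime(\xi)$ with $\xi<c_{2}^{*}$. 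Were $c_{2}^{*}\le a_{R}^{*}$, both $\xi$ and $c_{2}^{*}$ would lie in the region where $\vqprime$ is decreasing, forcing $\vqprime(c_{2}^{*})<\vqprime(\xi)$, a contradiction. Hence $c_{2}^{*}>a_{R}^{*}$, so $\vqprime$ is increasing on $[c_{2}^{*},\infty)$, condition \eqref{V_prime} holds, and Theorem \ref{optimality theorem} shows $\pi_{c_{1}^{*},c_{2}^{*}}$ is optimal.

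It remains to prove that $C^{*}$ is a singleton. Every minimiser shares the value $\ell^{*}=\min g$ and satisfies $\vqprime(c_{2}^{*})=\ell^{*}$ with $c_{2}^{*}>a_{R}^{*}$; as $\vqprime$ is strictly increasing on $(a_{R}^{*},\infty)$ this determines $c_{2}^{*}$ uniquely. With this $c_{2}^{*}$ fixed, I minimise $c_{1}\mapsto g(c_{1},c_{2}^{*})$ on $[0,c_{2}^{*}-\beta)$. The sign of $\partial_{c_{1}}g(c_{1},c_{2}^{*})$ equals the sign of $P(c_{1}):=\vq(c_{2}^{*})-\vq(c_{1})-\vqprime(c_{1})(c_{2}^{*}-c_{1}-\beta)$, and differentiating gives $P'(c_{1})=-\vqdprime(c_{1})(c_{2}^{*}-c_{1}-\beta)$. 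Because $\vqdprime<0$ on $(0,a_{R}^{*})$ and $\vqdprime>0$ on $(a_{R}^{*},\infty)$, $P$ is increasing then decreasing on $[0,c_{2}^{*}-\beta)$; moreover $P(c_{1})\to\vq(c_{2}^{*})-\vq(c_{2}^{*}-\beta)>0$ as $c_{1}\uparrow c_{2}^{*}-\beta$. Hence $P$ is either positive throughout, in which case $g(\cdot,c_{2}^{*})$ is increasing and the minimiser is the boundary point $c_{1}^{*}=0$, or it changes sign exactly once (from negative to positive on $(0,a_{R}^{*})$), in which case $g(\cdot,c_{2}^{*})$ decreases then increases and has a unique interior minimiser. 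In both situations $c_{1}^{*}$ is unique, so $C^{*}$ is a singleton and the optimal $(c_{1},c_{2})$ policy is unique.

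The main obstacle is this last step: showing that for the fixed $c_{2}^{*}$ the one-dimensional map $c_{1}\mapsto g(c_{1},c_{2}^{*})$ is unimodal. Everything rests on translating the $U$-shape of $\vqprime$ into a single sign change of the numerator $P$, which is exactly what the identity $P'(c_{1})=-\vqdprime(c_{1})(c_{2}^{*}-c_{1}-\beta)$ combined with the positive boundary limit delivers; care is only needed when $c_{2}^{*}-\beta\le a_{R}^{*}$, where the domain sits entirely in the decreasing part of $\vqprime$ and $P$ is merely increasing, but the same dichotomy and conclusion persist.
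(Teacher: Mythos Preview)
Your proof is correct and follows the same route as the paper, which simply invokes the preceding $U$-shape proposition together with Theorem~\ref{optimality theorem} and defers the uniqueness argument to Section~4 of \cite{L08}. What you have done is unpack those deferred details explicitly: the mean-value argument forcing $c_{2}^{*}>a_{R}^{*}$ and the unimodality of $c_{1}\mapsto g(c_{1},c_{2}^{*})$ via the sign analysis of $P(c_{1})$ are exactly the ingredients of \cite{L08}, transported to the Parisian refracted scale function $\vq$.
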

\begin{proof}
The thesis of the theorem follows directly from the above Proposition together with Lemma \ref{optimality theorem}. For more details see also Section 4 in \cite{L08}.
\end{proof}

Now, we will start numerical examples with the picture of $\vq$ and $\vqprime$. Let us consider the following parameters
\[
\mu = 0.5, \quad \sigma = 0.75, \quad r = 3, \quad \delta = 0.05, \quad q = 0.05 
\]
\begin{figure}[H]
\centering
\includegraphics[width=12cm]{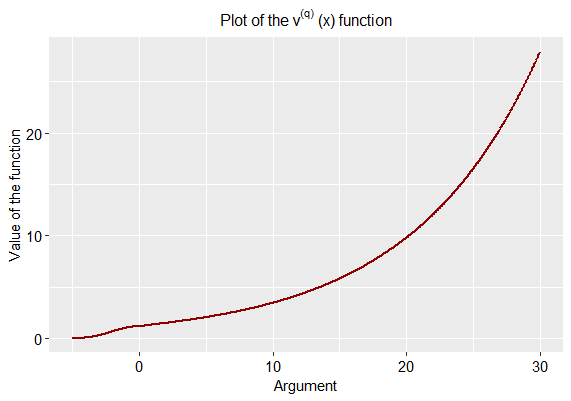}
\caption{Plot of the $\vq$ function for linear Brownian motion}
\end{figure}
Note that the shape of this function is similar as for the classic scale function for linear Brownian motion. In the  next picture we will consider $\vqprime$ with the optimal points $(c_1^*,c_2^*)$ and $\beta = 0.05$
\begin{figure}[H]
\centering
\includegraphics[width=14cm]{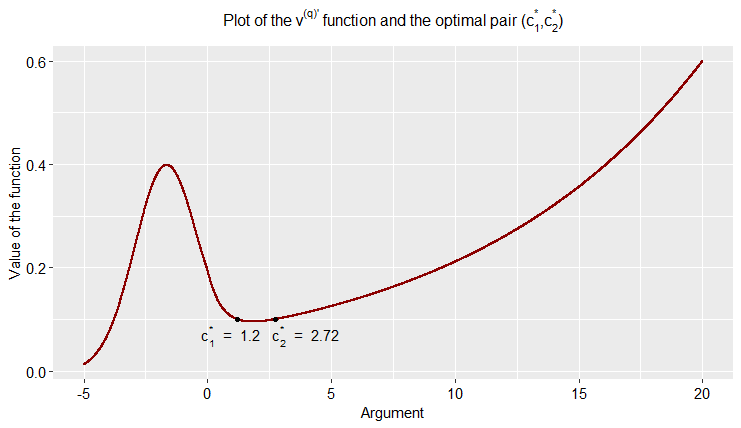}
\caption{Plot of the $\vqprime$ function for the linear Brownian motion and the optimal pair $(c_1^*,c_2^*)$. Transaction cost equal to $0.05$}
\end{figure}
The first interesting observation is the shape of this function for $x < 0$. One can see that $(c_1^*,c_2^*)$ belongs to the set $\mathcal{B}$ from the Proposition \ref{Prop derivative of v at c2}. One can also observe that our optimal pair $(c_1^{*},c_2^{*})$ satisfy condition from the Theorem \ref{optimality theorem}.   

Moreover, one can be interested in the behaviour of the optimal pair $(c_1^{*},c_2^{*})$ with the respect to the change of the parameter $\beta$. Therefore, let us set $\beta = 1$
\begin{figure}[H]
\centering
\includegraphics[width=14cm]{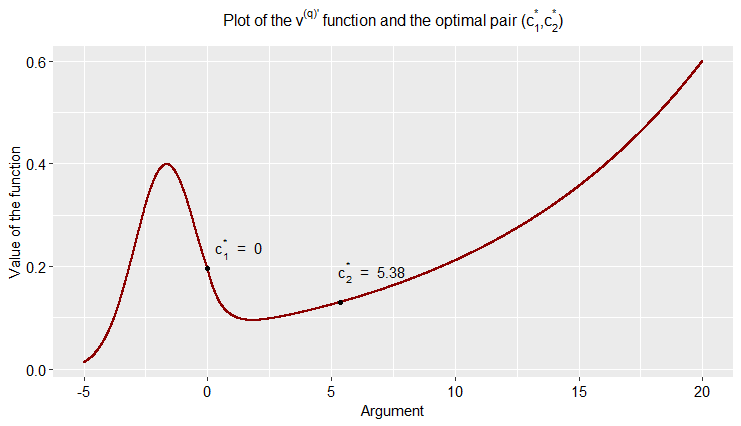}
\caption{Plot of the $\vqprime$ function for the linear Brownian motion and optimal pair $(c_1^*,c_2^*)$ not belonging to the set B. Case for $\beta = 1$}
\end{figure} 
Thus, one can see that depending on the parameters of the process one can get different possibilities from the Proposition \ref{Prop derivative of v at c2}. 
\subsection{Cram\'er-Lundberg process}\hfill\\
In the second example we will consider the Cram\'er-Lundberg process
\[
X_t = pt - \sum_{i= 1}^{N_t} U_i,
\]
where $p >0$, $\lbrace U_i \rbrace_{i=1}^{\infty}$ is an $i.i.d.$ sequence of exponential random variables with the parameter $\mu$, $\lbrace N_t \rbrace_{t \geq 0}$ is a homogeneous Poisson process with the intensity $\lambda > 0 $. We also assume that the Poisson process and the exponential random variables are mutually independent.
For this process the scale function is of the following form (see, e.g. \cite{IC16})
\[
\qscale(x) = \frac{1}{p}\Bigl(\aplus e^{\qplus x} - \aminus e^{\qminus x}\Bigr),
\]
where
\begin{equation}
\nonumber
A^{\pm} = \frac{\mu + q^{\pm}(q)}{\qplus - \qminus}, \quad \quad  q^{\pm}(q) = \frac{q + \lambda - \mu p \pm \sqrt{(q + \lambda - \mu p)^2 + 4pq \mu }}{2p}
\end{equation}
Scale function for the process $Y$ is of the form
\[
\qscaleY(x) = \frac{1}{p-\delta}\Bigl(\aplusY e^{\qplusY x} - \aminusY e^{\qminusY x}\Bigr),
\]
where 
\[
A^{\pm}_{Y} = \frac{\mu + q^{\pm}_{Y}(q)}{\qplusY - \qminusY}
\]
\[
q^{\pm}_{Y}(q) = \frac{q + \lambda - \mu(p-\delta) \pm \sqrt{(q + \lambda - \mu(p-\delta))^2 + 4(p-\delta)q \mu}}{2(p-\delta)}
\]

\begin{prop}\label{wq_CL_proces}
For $z > 0$, we have that
\begin{equation}
\nonumber
\begin{split}
\wq (x;-z) &= (p-\delta) \qscaleY(x) \qscale(z) - \frac{1}{\mu \lambda}\Bigl[(q+\lambda)\qscale(z) - p \qscaleprime(z)\Bigr]\\&\cdot \Bigl[(q+\lambda)\qscaleY (x) - (p-\delta)\mathbb{W}^{(q)'}(x)\Bigr]
\end{split}
\end{equation}
\end{prop}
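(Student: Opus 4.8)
The plan is to evaluate the defining convolution for $\wq$ directly, using that for the Cram\'er--Lundberg process every scale function is a combination of two exponentials. Since $z>0$, for $x\geq 0$ the definition \eqref{small w} with $a=-z$ reads
\[
\wq(x;-z)=\qscale(x+z)+\delta\int_{0}^{x}\WWq(x-y)\,\qscaleprime(y+z)\,dy .
\]
First I would substitute $\qscale(u)=\tfrac1p(\aplus e^{\qplus u}-\aminus e^{\qminus u})$, the analogous expression for $\WWq$, and $\qscaleprime(v)=\tfrac1p(\aplus\,\qplus\, e^{\qplus v}-\aminus\,\qminus\, e^{\qminus v})$, and then carry out the convolution with the elementary identity $\int_{0}^{x}e^{a(x-y)}e^{by}\,dy=(e^{bx}-e^{ax})/(b-a)$. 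This produces a sum of terms, each a product of an exponential in $z$ (with rate $\qplus$ or $\qminus$) and exponentials in $x$ with rates $\qplus,\qminus$ (inherited from $\qscaleprime$) and $\qplusY,\qminusY$ (inherited from $\WWq$).

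The crucial step is to show that all the $e^{\qplus x}$ and $e^{\qminus x}$ contributions of the convolution cancel exactly against the matching terms of $\qscale(x+z)$, leaving the $x$-dependence carried solely by $e^{\qplusY x}$ and $e^{\qminusY x}$. Collecting the coefficient of $e^{\qplus x}e^{\qplus z}$, this cancellation amounts to the identity
\[
1+\frac{\delta\,\qplus}{p-\delta}\left(\frac{\aplusY}{\qplus-\qplusY}-\frac{\aminusY}{\qplus-\qminusY}\right)=0 ,
\]
and its twin with $\qplus$ replaced by $\qminus$. I would verify this by recalling that $\qplus,\qminus$ are the roots of $p\theta^{2}+(p\mu-\lambda-q)\theta-q\mu$ while $\qplusY,\qminusY$ are the roots of its refracted analogue; subtracting the two quadratics and evaluating at $\qplus$ gives $(\qplus-\qplusY)(\qplus-\qminusY)=-\delta\,\qplus(\qplus+\mu)/(p-\delta)$, which combined with $\aplusY=(\mu+\qplusY)/(\qplusY-\qminusY)$ turns the bracket into $-(p-\delta)/(\delta\,\qplus)$ and collapses the identity to $1-1=0$.

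Once the $X$-rate exponentials have disappeared, it remains to recognise the surviving $e^{\qplusY x},e^{\qminusY x}$ terms as the stated right-hand side. For this I would use the two ``bracket'' identities that follow from $\psi(\qplus)=\psi(\qminus)=q$, equivalently $q+\lambda-p\,\qplus=\lambda\mu/(\mu+\qplus)$, namely
\[
(q+\lambda)\qscale(z)-p\,\qscaleprime(z)=\frac{\lambda\mu}{p(\qplus-\qminus)}\bigl(e^{\qplus z}-e^{\qminus z}\bigr)
\]
together with its $Y$-counterpart for $(q+\lambda)\WWq(x)-(p-\delta)\WWqprime(x)$; matching the coefficients of $e^{\qplus z}$ and $e^{\qminus z}$ against the two summands $(p-\delta)\WWq(x)\qscale(z)$ and $-\tfrac{1}{\mu\lambda}[(q+\lambda)\qscale(z)-p\,\qscaleprime(z)]\,[(q+\lambda)\WWq(x)-(p-\delta)\WWqprime(x)]$ then reproduces the formula. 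I expect the only real obstacle to be the algebraic bookkeeping: keeping the exponential contributions organised and repeatedly invoking the Vieta relations $\qplus\,\qminus=-q\mu/p$ and $\qplusY\,\qminusY=-q\mu/(p-\delta)$ and the amplitude relations $\aplus=(\mu+\qplus)/(\qplus-\qminus)$, $\aplusY=(\mu+\qplusY)/(\qplusY-\qminusY)$; no analytic difficulty arises beyond this.
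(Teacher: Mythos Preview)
Your proposal is correct and follows essentially the same route as the paper: both start from the convolution definition of $\wq$, insert the explicit two-exponential scale functions, and use the parameter relations $\frac{\aplusY\,\qplus}{\qplus-\qplusY}-\frac{\aminusY\,\qplus}{\qplus-\qminusY}=-\frac{p-\delta}{\delta}$ (and its $\qminus$ twin) to kill the $e^{q^{\pm}(q)x}$ terms. Your identity $(\qplus-\qplusY)(\qplus-\qminusY)=-\delta\,\qplus(\qplus+\mu)/(p-\delta)$ is exactly the paper's second pair of relations rewritten, and the bracket identities you invoke for the final recognition step are the ones the paper itself records in the proof of the next proposition; you simply spell out the bookkeeping and the derivation of the relations more explicitly than the paper does.
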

\begin{proof}
To obtain such representation we had to used the following relations between the parameters of scale functions
\begin{equation}
\nonumber
\frac{\aplusY \qplus}{\qplus - \qplusY} - \frac{\aminusY \qplus}{\qplus - \qminusY} = - \frac{p-\delta}{\delta}, \quad \frac{\aminusY \qminus}{\qminus - \qminusY} - \frac{\aplusY \qminus}{\qminus - \qplusY} = \frac{p-\delta}{\delta} 
\end{equation}
and
\begin{equation}
\nonumber
\frac{\qplus}{\qplus - \qplusY} = -\frac{p-\delta}{\delta}\cdot \frac{\qplus - \qminusY}{\qplus + \mu}, \quad \frac{\qminus}{\qminus - \qplusY} = -\frac{p-\delta}{\delta}\cdot \frac{\qminus - \qminusY}{\qminus+\mu}
\end{equation}
\end{proof}

For a Parisian refracted scale function we will obtain formula, which will be divided into three parts. Nevertheless, before we state the representation we have from 
\cite{LCP13}, that
\begin{equation}\label{Measure of Ui}
\mathbb{P}(\sum_{i=1}^{N_r} U_i \in dy) = e^{-\lambda r}\Bigl(\delta_0(dy) +  e^{-\mu y} \sum_{m = 0}^{\infty} \frac{(\mu \lambda r)^{m+1}}{m!(m+1)!} y^m dy\Bigr)
\end{equation}\\ 

\begin{prop}
In the Cr\'amer-Lundberg setting the function $\vq$ is of the following form
\begin{itemize}
\item For $x > 0$
\begin{equation}
\nonumber
\begin{split}
\vq (x) =& e^{qr}(p-\delta)\qscaleY(x) - \frac{1}{\mu \lambda}\left[(q+\lambda) \qscaleY(x) - (p-\delta)\qscaleprimeY (x)\right]\\&\cdot \Bigl[(q+\lambda)e^{qr} - p C\Bigr], 
\end{split}
\end{equation}
where 
\begin{equation}
\nonumber
\begin{split}
C =  e^{-\lambda r}\Bigl[& p \qscaleprime (pr) + \aminus \qplus e^{\qplus pr}\sum_{m=0}^{\infty}\frac{(pr(\qminus+\mu))^m}{m!(m+1)!} \gamma(m+1,(\qplus+\mu)pr)\\& \cdot [pr(\qplus+\mu)-(m+1)] - \aplus\qminus e^{\qminus pr}\sum_{m=0}^{\infty}\frac{(pr(\qplus+\mu))^m}{m!(m+1)!} \\& \cdot\gamma(m+1,(\qminus+\mu)pr) \left[pr(\qminus+\mu)-(m+1)\right] \\&+ \frac{e^{-\mu p r}}{pr} \sum_{m=0}^{\infty} \frac{(p\lambda \mu r^2)^{m+1}}{m!(m+1)!} \Bigr]
\end{split}
\end{equation}
\item For $x \leq 0 \wedge x \geq -pr$
\begin{equation}
\nonumber
\begin{split}
\vq (x) =& e^{-\lambda r}\Bigl[p\qscale (x+pr) + \aminus e^{\qplus(x+pr)}\sum_{m=0}^{\infty}\frac{(pr(\qminus+\mu))^m}{m!(m+1)!}\\& \cdot\gamma(m+1,(pr+x)(\qplus+\mu)) [pr(\qplus+\mu)-(m+1)] \\&- \aplus e^{\qminus(x+pr)}\sum_{m=0}^{\infty}\frac{(pr(\qplus+\mu))^m}{m!(m+1)!}\\& \cdot  \gamma(m+1,(pr+x)(\qminus+\mu))[pr(\qminus+\mu)-(m+1)]\Bigr]
\end{split}
\end{equation}
\item For $x < -pr$
\begin{equation}
\nonumber
\vq (x) = 0
\end{equation}
\end{itemize}
where $\gamma(x,a) = \int_{0}^{x} e^{-t}t^{a-1}dt$ is an incomplete gamma function.
\end{prop}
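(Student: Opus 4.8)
The plan is to evaluate $\vq(x)=\int_0^\infty \wq(x;-z)\,\frac{z}{r}\,\p(X_r\in dz)$ directly, splitting the argument into the three regimes $x<-pr$, $-pr\le x\le 0$ and $x>0$ and inserting in each the appropriate branch of the refracted scale function from \eqref{small w} and Proposition \ref{wq_CL_proces}. The one analytic input needed is the law of $X_r=pr-\sum_{i=1}^{N_r}U_i$, which I read off from \eqref{Measure of Ui}: it has an atom of mass $e^{-\lambda r}$ at $z=pr$ (the event $N_r=0$) and an absolutely continuous part on $(0,pr)$ with density $e^{-\lambda r}e^{-\mu(pr-z)}\sum_{m\ge0}\frac{(\mu\lambda r)^{m+1}}{m!(m+1)!}(pr-z)^m$. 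In particular the support of $X_r$ is contained in $(-\infty,pr]$.

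The regime $x<-pr$ is immediate: for $x<0$, \eqref{small w} gives $\wq(x;-z)=\qscale(x+z)$, which vanishes unless $z>-x>pr$, i.e.\ outside the support of $X_r$; hence $\vq(x)=0$. For $x>0$ I exploit the separable structure of Proposition \ref{wq_CL_proces}, where $\wq(x;-z)$ is a combination of the $x$-functions $\qscaleY(x)$ and $(q+\lambda)\qscaleY(x)-(p-\delta)\qscaleprimeY(x)$ multiplied by the $z$-functions $\qscale(z)$ and $\qscaleprime(z)$. Pulling the $x$-factors outside the integral reduces the problem to the two scalars $\int_0^\infty \qscale(z)\frac{z}{r}\p(X_r\in dz)=e^{qr}$, by \eqref{int wq and Xr in dz}, and $C:=\int_0^\infty \qscaleprime(z)\frac{z}{r}\p(X_r\in dz)$; assembling them gives the stated expression.

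The remaining and most laborious point is the evaluation of $C$, which runs through the same mechanism that produces the $-pr\le x\le 0$ formula (there one integrates $\qscale(x+z)$, from the first branch of \eqref{small w}, over $z\in[-x,pr]$). I would insert the exponential form $\qscale(z)=\frac1p(\aplus e^{\qplus z}-\aminus e^{\qminus z})$ together with its derivative and the density above, and substitute $u=pr-z$. Because of the explicit linear factor $z=pr-u$, each summand then produces lower incomplete gamma functions of two consecutive orders, namely $\int_0^{pr}u^m e^{-(\qplus+\mu)u}\,du$ and $\int_0^{pr}u^{m+1}e^{-(\qplus+\mu)u}\,du$ (and their $\qminus$ versions), after the rescaling $t=(\qplus+\mu)u$.

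Finally, the standard recurrence lowering the shape parameter of the incomplete gamma function by one collapses these two consecutive orders into the single factor $[pr(\qplus+\mu)-(m+1)]\,\gamma(m+1,(\qplus+\mu)pr)$ (and its $\qminus$ counterpart) that appears in the statement, while the boundary terms it throws off recombine — via the algebraic identity $\frac{\aplus\qplus}{\qplus+\mu}-\frac{\aminus\qminus}{\qminus+\mu}=1$ — into the residual series $\frac{e^{-\mu pr}}{pr}\sum_{m}\frac{(p\lambda\mu r^2)^{m+1}}{m!(m+1)!}$. The atom at $z=pr$ contributes the leading terms $pe^{-\lambda r}\qscaleprime(pr)$ for $x>0$ and $pe^{-\lambda r}\qscale(x+pr)$ for $-pr\le x\le 0$. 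The main obstacle is precisely this bookkeeping: justifying the term-by-term interchange of summation and integration, applying the gamma recurrence uniformly in $m$, and verifying that the leftover exponential pieces telescope exactly into the claimed closed form.
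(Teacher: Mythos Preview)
Your proposal is correct and follows essentially the same route as the paper's proof: the paper also splits into the three regimes, dispatches $x<-pr$ by the support argument, uses the separable structure of Proposition~\ref{wq_CL_proces} together with \eqref{int wq and Xr in dz} and \eqref{Measure of Ui} to reduce the $x>0$ case to the scalar $C$, and for $-pr\le x\le 0$ rewrites $\vq(x)=\int_{-x}^{pr}\qscale(x+z)\frac{z}{r}\p(X_r\in dz)$ before declaring the remaining calculation ``simply, but long'' and omitting it. Your outline of that omitted calculation (the substitution $u=pr-z$, the incomplete-gamma recurrence collapsing consecutive orders, and the identity $\frac{\aplus\qplus}{\qplus+\mu}-\frac{\aminus\qminus}{\qminus+\mu}=1$ absorbing the boundary terms) is in fact more explicit than what the paper provides.
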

\begin{proof}
We will divide this proof into the following parts
\begin{itemize}
\item For $x > 0$
\begin{equation}
\begin{split}
\nonumber
\vq (x) =& \vqint = (p-\delta)\qscaleY(x)\int_{0}^{\infty}\qscale (z) \frac{z}{r} \mathbb{P}(X_r \in dz) -\\& \frac{1}{\mu \lambda}\Bigl[(q+\lambda)\qscaleY(x) - (p-\delta)\qscaleprimeY(x)\Bigr]\Bigl[(q+\lambda) \int_{0}^{\infty}\qscale(z)\frac{z}{r}\mathbb{P}(X_r \in dz) - \\& p\int_{0}^{\infty}\qscaleprime(z)\frac{z}{r}\mathbb{P}(X_r \in dz)\Bigr] 
\end{split}
\end{equation}
One can see that we need to calculate the following integrals
\begin{equation*}
 \int_{0}^{\infty}\qscale(z)\frac{z}{r}\mathbb{P}(X_r \in dz)
\end{equation*}
and
\begin{equation*}
\int_{0}^{\infty}\qscaleprime(z)\frac{z}{r}\mathbb{P}(X_r \in dz).
\end{equation*}
For the first integral one can use (\ref{int wq and Xr in dz}), however for the second integral one need to do some calculations with the use of (\ref{Measure of Ui}). Therefore,
\begin{equation}
\begin{split}
\nonumber
\int_{0}^{\infty}\qscaleprime (z) \frac{z}{r}\mathbb{P}(X_r \in dz) =& e^{-\lambda r}\Bigl[ p \qscaleprime (pr) + \aminus \qplus e^{\qplus pr}\sum_{m=0}^{\infty}\frac{(pr(\qminus+\mu))^m}{m!(m+1)!}\cdot \\&\cdot \gamma\left(m+1,(\qplus+\mu)pr\right)\left[pr(\qplus+\mu)-(m+1)\right] \\&- \aplus\qminus e^{\qminus pr}\sum_{m=0}^{\infty}\frac{(pr(\qplus+\mu))^m}{m!(m+1)!}  \\& \cdot \gamma\left(m+1,(\qminus+\mu)pr\right)\left[pr(\qminus+\mu)-(m+1)\right] \\&+ \frac{e^{-\mu p r}}{pr} \sum_{m=0}^{\infty} \frac{(p\lambda \mu r^2)^{m+1}}{m!(m+1)!}\Bigr]
\end{split}
\end{equation}
Putting all the pieces together one can get postulated formula for $\vq$ for $x > 0.$

\item For $x \leq 0 \wedge x \geq -pr$\\
Note that, in this case, $\vq$ is of the following form 
\[
\vq (x) = \int_{0}^{\infty} \qscale (x+z) \frac{z}{r} \mathbb{P}(X_r \in dz)
\]
With the probability one, random variable $X_{r}$, can achieve at most value $pr$ and we know that $\qscale (x+z) > 0$ iff $x+z > 0$, thus
\[
\vq (x) = \int_{-x}^{pr} \qscale(x+z) \frac{z}{r}\mathbb{P}(X_r \in dz)
\] 
Using this observation rest of the proof involve simply, but long, calculations, thus let us omit this.
\item For $x < -pr$\\
As we state in the previous case, when $x < -pr$ then $x + z < 0$. Therefore $\qscale (x+z) = 0$ and 
\[
\vq (x) = 0
\]
\end{itemize}
\end{proof}

\begin{prop}
Fix $q > 0$ and $z > 0$, there  exist a constant $a^{*}_R \geq 0$ such that the function $\wqprime(x;-z)$ is decreasing on $(0,a^{*}_R)$ and is increasing on $(a^{*}_R,\infty)$. This also implies the same for $\vqprime(x)$.
\end{prop}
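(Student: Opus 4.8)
The plan is to imitate, step by step, the argument just given for the linear Brownian motion, the only change being the explicit exponents that appear. By the definition of $\vq$ and Proposition \ref{wq_CL_proces}, it suffices to control the sign of the second derivative $w^{(q)''}(\cdot;-z)$, since a function whose second derivative changes sign once, from negative to positive, is precisely one that first decreases and then increases. First I would insert into the formula of Proposition \ref{wq_CL_proces} the explicit representations
\[
\qscaleY(x) = \frac{1}{p-\delta}\bigl(\aplusY e^{\qplusY x} - \aminusY e^{\qminusY x}\bigr), \qquad \qscaleprimeY(x) = \frac{1}{p-\delta}\bigl(\aplusY \qplusY e^{\qplusY x} - \aminusY \qminusY e^{\qminusY x}\bigr),
\]
so that $\wq(\cdot;-z)$ becomes a linear combination $P(z)e^{\qplusY x} + Q(z)e^{\qminusY x}$ of the two exponentials, with coefficients $P(z),Q(z)$ depending on $z$ but \emph{not} on $x$. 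Differentiating twice in $x$ then yields
\[
w^{(q)''}(x;-z) = (\qplusY)^2 P(z)\, e^{\qplusY x} + (\qminusY)^2 Q(z)\, e^{\qminusY x},
\]
and here the crucial structural fact is that $\qplusY>0>\qminusY$, which is immediate from the quadratic solved by $\qplusY,\qminusY$, whose roots have product $-q\mu/(p-\delta)<0$. Thus $e^{\qplusY x}$ is increasing and $e^{\qminusY x}$ is decreasing, exactly as $e^{\rho_2^{Y}x}$ and $e^{-\rho_1^{Y}x}$ behaved in the Brownian case.

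The main obstacle, and the analogue of the claim ``$A>0$'' in the Brownian proof, is to verify that $P(z)>0$ for every $z>0$; this is what makes the dominant term $(\qplusY)^2P(z)e^{\qplusY x}$ positive and increasing. Rather than estimating $P(z)$ by hand through the parameter relations used to prove Proposition \ref{wq_CL_proces}, I would argue by growth: from the definition \eqref{small w} the function $\wq(\cdot;-z)$ is a sum of the positive increasing function $\qscale(\cdot+z)$ and a positive integral term, hence $\wq(x;-z)\to\infty$ as $x\to\infty$. Since $\qminusY<0$ forces $Q(z)e^{\qminusY x}\to 0$, the divergence must come from $P(z)e^{\qplusY x}$, which is possible only if $P(z)>0$. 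Consequently $(\qplusY)^2 P(z)>0$.

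It then remains to run the same dichotomy as in the Brownian case. If $(\qminusY)^2Q(z)\ge 0$, then both summands of $w^{(q)''}(\cdot;-z)$ are non-negative and the first is strictly positive, so $w^{(q)''}(x;-z)>0$ for all $x>0$ and $a^{*}_R=0$. If instead $(\qminusY)^2Q(z)<0$, then because $\qminusY<0$ the negative summand $(\qminusY)^2Q(z)e^{\qminusY x}$ is itself increasing towards $0$, so $w^{(q)''}(\cdot;-z)$ is a sum of two increasing functions, hence strictly increasing and unbounded; being negative near $0$ and tending to $+\infty$, it has a single zero $a^{*}_R>0$, to the left of which $\wqprime(\cdot;-z)$ decreases and to the right of which it increases. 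Finally, for $\vqprime$ the point is that the exponents $\qplusY,\qminusY$ do not depend on $z$: differentiating $\vq(x)=\int_0^\infty \wq(x;-z)\frac{z}{r}\mathbb P(X_r\in dz)$ twice under the integral sign, justified by standard domination on compact $x$-intervals, yields $\vqdprime(x)=\bar P\, e^{\qplusY x}+\bar Q\, e^{\qminusY x}$ with $\bar P=\int_0^\infty (\qplusY)^2P(z)\frac{z}{r}\mathbb P(X_r\in dz)>0$, so the very same dichotomy produces a turning point for $\vqprime$ and completes the proof.
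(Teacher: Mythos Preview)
Your proposal is correct and follows essentially the same route as the paper: both write $\wq(x;-z)$ as a linear combination $Ae^{\qplusY x}-Be^{\qminusY x}$ of the two exponentials (the paper first uses the identities $\frac{q+\lambda}{p-\delta}-\qplusY=\qminusY+\mu$ to simplify the coefficients explicitly, whereas you read the structure off directly), establish positivity of the leading coefficient by the growth argument $\wq(x;-z)\to\infty$, and then run the same sign dichotomy on the second derivative. Your treatment of $\vqprime$ is in fact slightly more careful than the paper's: you observe that the exponents $\qplusY,\qminusY$ are independent of $z$, so integrating in $z$ preserves the two-exponential form with a positive leading coefficient, while the paper simply asserts that the claim for $\vqprime$ follows ``directly from its definition''. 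One small slip: in the case $Q(z)<0$ you write that $w^{(q)''}$ is ``negative near $0$'', but this need not hold; what matters, and what you do argue, is that $w^{(q)''}$ is strictly increasing and unbounded, hence has at most one zero on $(0,\infty)$, which suffices.
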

\begin{proof}
At the beginning let us note that
\begin{equation}
\frac{q+\lambda}{p-\delta} - \qplusY = \qminusY + \mu \quad \text{and} \quad \frac{q+\lambda}{p} - \qplus = \qminus+\mu .
\end{equation}
From the above and Proposition \ref{wq_CL_proces} one can obtain
\begin{equation*}
\begin{split}
&(q+\lambda)\WWq (x) - (p-\delta)\WWqprime (x) =  \frac{\mu \lambda}{(p-\delta)(\qplusY - \qminusY)}\left[e^{\qplusY x} - e^{\qminusY x}\right] \\&
(q+\lambda)\qscale(z) - p \qscaleprime(z) = \frac{\mu \lambda}{p(\qplus - \qminus)}\left[e^{\qplus z} - e^{\qminus z}\right] 
\end{split}
\end{equation*}
Therefore, one can rewrite formula for $\wq$ as 
\begin{equation*}
\begin{split}
\wq (x;-z) =& (p-\delta)\WWq (x) \qscale (z) - \frac{\mu \lambda}{p(p-\delta)(\qplus - \qminus)(\qplusY - \qminusY)}\\& \cdot \left[e^{\qplus z} - e^{\qminus z}\right]\left[e^{\qplusY x} - e^{\qminusY x}\right]
\end{split}   
\end{equation*}
From this, one can also obtain more explicit form 
\begin{equation*}
    \begin{split}
       \wq (x;-z) &= e^{\qplusY x}\Bigl[ \aplusY \qscale (z) - \frac{\mu \lambda}{p(p-\delta)(\qplus - \qminus)(\qplusY - \qminusY)}\\&\cdot\left(e^{\qplus z}  - e^{\qminus z}\right)\Bigr] - e^{\qminusY x}\Bigl[\aminusY \qscale (z) \\& - \frac{\mu \lambda}{p(p-\delta)(\qplus - \qminus)(\qplusY - \qminusY)}\left(e^{\qplus z}  - e^{\qminus z}\right)\Bigr]. 
    \end{split}
\end{equation*}
Let us fix the following notation
\begin{equation*}
    \begin{split}
       &A = \aplusY \qscale (z) - \frac{\mu \lambda}{p(p-\delta)(\qplus - \qminus)(\qplusY - \qminusY)}\left(e^{\qplus z}  - e^{\qminus z}\right),\\&
       B = \aminusY \qscale (z) - \frac{\mu \lambda}{p(p-\delta)(\qplus - \qminus)(\qplusY - \qminusY)}\left(e^{\qplus z}  - e^{\qminus z}\right).
    \end{split}
\end{equation*}
Then $\wq$ can be written as 
\[
\wq(x;-z) = Ae^{\qplusY x}-Be^{\qminusY x}.
\]
One can check that $\qplusY > 0$ and $\qminusY < 0$ and thus 
\[
\lim_{x \rightarrow +\infty}e^{\qplusY x} = +\infty, \quad \lim_{x \rightarrow +\infty}e^{\qminusY x} = 0.
\]
Then, from $\lim_{x \rightarrow +\infty} \wq(x;-z) = +\infty$ one can conclude that $A > 0$. Next, we are interested in the sign of $w^{(q)''}(x;-z)$
\[
w^{(q)''}(x;-z) = A(\qplusY)^2 e^{\qplusY x} - B(\qminusY)^2 e^{\qminusY x} 
\]
If $B < 0$ then $w^{(q)''}(x;-z)$ is positive on the whole positive half-line. In such case $a_R^{*} = 0$. If $B > 0$ then one can see that $w^{(q)''}(x;-z)$ is an increasing and unbounded function. This ends the proof.
\end{proof}

\begin{theorem} 
For the Cram\'er-Lundberg model there is a
unique $(c_1; c_2)$ policy which is optimal for the impulse control problem. 
\end{theorem}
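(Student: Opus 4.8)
The plan is to reproduce the strategy used for the linear Brownian motion model, since the preceding Proposition has just established the identical qualitative behaviour of $\vqprime$ for the Cram\'er-Lundberg model: there is a constant $a_R^* \geq 0$ such that $\vqprime$ is decreasing on $(0,a_R^*)$ and increasing on $(a_R^*,\infty)$. Optimality will then follow from Theorem \ref{optimality theorem}, and uniqueness from the observation that such a decreasing-then-increasing derivative forces the minimiser of $g$ to be unique.

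First I would verify the smoothness hypothesis required by Theorem \ref{optimality theorem}. For the Cram\'er-Lundberg process with exponential claims the L\'evy measure is absolutely continuous (with density $\lambda\mu e^{\mu z}$ on $(-\infty,0)$) and therefore has no atoms; by Remark \ref{smoothness}, i.e.\ Theorem 2.9 of \cite{KRS2008}, this gives $\qscale \in C^1(0,\infty)$, so that $\vq$ is sufficiently smooth. In particular Proposition \ref{Prop derivative of v at c2} applies and guarantees that $C^*$ is non-empty.

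Next I would check that every $(c_1^*,c_2^*)\in C^*$ satisfies condition \eqref{V_prime}, which by the preceding Proposition is equivalent to $c_2^*\geq a_R^*$. In the interior case (i) of Proposition \ref{Prop derivative of v at c2} one has $\vqprime(c_1^*)=\vqprime(c_2^*)$ with $c_1^*<c_2^*$; since $\vqprime$ is strictly monotone on each of $(0,a_R^*)$ and $(a_R^*,\infty)$, two distinct arguments with the same value must lie on opposite sides of the minimum, whence $c_1^*\leq a_R^*\leq c_2^*$. In the boundary case (ii) one has $c_1^*=0$ and the first-order condition $\vqprime(c_2^*)=(\vq(c_2^*)-\vq(0))/(c_2^*-\beta)$; here $c_2^*<a_R^*$ is impossible, since on the decreasing branch $\vqprime(y)>\vqprime(c_2^*)$ for $y<c_2^*$ would give $\vq(c_2^*)-\vq(0)>c_2^*\,\vqprime(c_2^*)>(c_2^*-\beta)\vqprime(c_2^*)$, contradicting that condition. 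Either way $c_2^*\geq a_R^*$, so \eqref{V_prime} holds and Theorem \ref{optimality theorem} shows that $\pi_{c_1^*,c_2^*}$ is an optimal strategy.

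Finally, for uniqueness I would again exploit the shape of $\vqprime$: writing the optimality conditions at an interior minimiser as $\vqprime(c_1^*)=\vqprime(c_2^*)=g(c_1^*,c_2^*)$, strict monotonicity on each branch determines $c_1^*$ and $c_2^*$ as the unique pair on opposite sides of $a_R^*$ having equal derivative and matching secant slope, while the boundary case $c_1^*=0$ is settled by the monotonicity of $g_0(c_2)=g(0,c_2)$ inherited from the same shape. I expect the main obstacle to be precisely this last uniqueness bookkeeping near $a_R^*$, together with the secant comparison in the case $c_1^*=0$; these are exactly the computations carried out in Section 4 of \cite{L08}, to which I would defer for the routine verification.
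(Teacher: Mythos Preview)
Your proposal is correct and follows essentially the same approach as the paper: the paper's own proof is a two-line appeal to the preceding Proposition (decreasing-then-increasing shape of $\vqprime$) together with Theorem~\ref{optimality theorem}, deferring the remaining bookkeeping to Section~4 of \cite{L08}. You have simply unpacked the intermediate steps---the smoothness check, the case analysis showing $c_2^*\geq a_R^*$, and the uniqueness argument---that the paper leaves to the reader and to \cite{L08}.
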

\begin{proof}
The thesis of the theorem follows directly from the above Proposition together with Lemma \ref{optimality theorem}. For more details see also Section 4 in \cite{L08}.
\end{proof}

Using the above results one can plot the picture of the example of $\vq$ and $\vqprime$ for this process. Namely, let us set
\[
p = 3, \quad \lambda = 2, \quad \mu = 1, \quad r = 2, \quad q = 0.05, \quad \delta = 0.25
\]
Note that, we set such parameters that $p > \frac{\lambda}{\mu}$. Moreover, we know that $\vq (x) = 0$ if $x < -pr$, therefore we will consider x $\geq -pr$

\begin{figure}[H]
\centering
\includegraphics[width=14cm]{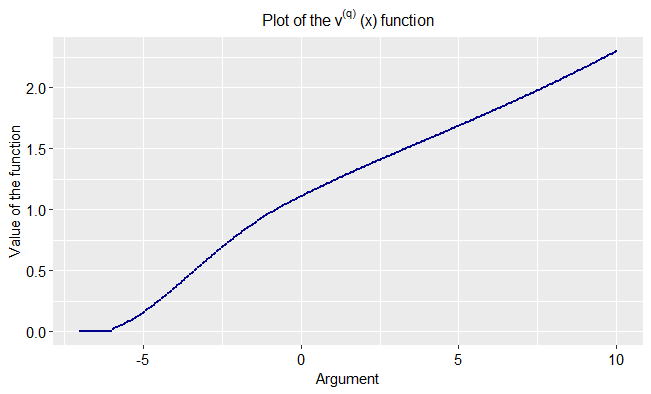}
\caption{Plot of the $\vq$ for Cram\'er-Lundberg}
\label{CLvq}
\end{figure}
As in the linear Brownian motion setting, one can also see the similar shape of Parisian scale function with the shape of classical scale function. However, even if this is not directly clear from the Figure \ref{CLvq}, $\vq$ is not a continuous function at $x = -pr$.
Now, we will also show the plot of the $\vqprime$ with the optimal points $(c_1^*,c_2^*)$ with $\beta = 0.02$
\begin{figure}[H]
\centering
\includegraphics[width=14cm]{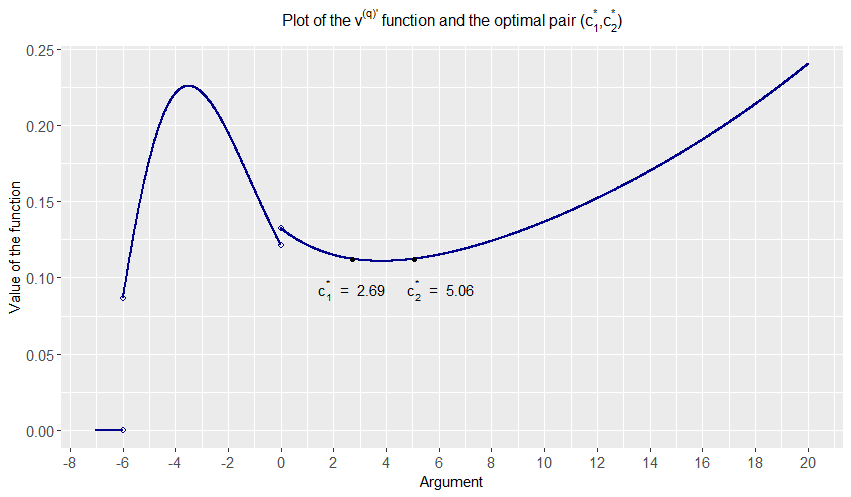}
\caption{Plot of the $\vqprime$ for the Cram\'er-Lundberg process and the optimal pair $(c_1^*,c_2^*)$. Transaction cost is equal to $0.02$}
\label{CL vq prime optimal pair v1}
\end{figure}
One can see from the Figure \ref{CL vq prime optimal pair v1} that we are in the case when $(c_1^*,c_2^*)$ belongs to the set $\mathcal{B}$. In addition, let us note that optimal pair $(c_1^{*},c_2^{*})$ satisfy the condition from the Theorem \ref{optimality theorem}. As in the case of the linear Brownian motion we would like to manipulate with the parameter $\beta$. Let us set $\beta = 1$
\begin{figure}[H]
\centering
\includegraphics[width=14cm]{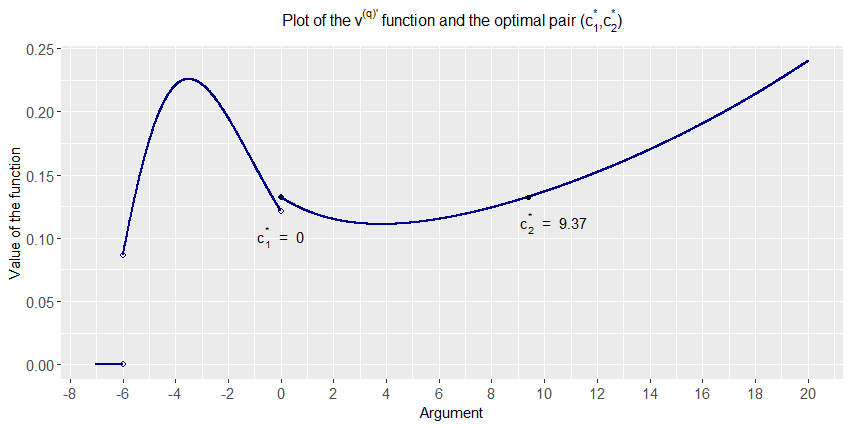}
\caption{Plot of the $\vqprime$ for the Cram\'er-Lundberg process and the optimal pair $(c_1^*,c_2^*)$ not belonging to the set $\mathcal{B}$. Case for $\beta = 1$}
\label{CL vq prime optimal pair v1 beta equal to 1}
\end{figure}
One can see from the Figure $\ref{CL vq prime optimal pair v1 beta equal to 1}$ that in such case costs of the transaction are to high and after dividend payment surplus level is moved into level zero. Note that we put point $c_1^*$ and $c_2^*$ into plot of $\vqprime$ only for illustrating purpose. It is clear that in such case we are not interested in the value of $\vqprime(0)$ because such function is not well define in this point.


\begin{thebibliography}{99}
 \bibitem{APP07} Avram F., Palmowski Z. and Pistorius M. R.,
 \textit{On the optimal dividend problem for a spectrally negative L\'evy process}, The Annals of Applied Probability, 17 156-180 (2007)
 \bibitem{B96} Bertoin, J. \textit{L\'evy Processes}, Cambridge Tracts in Mathematics (1996) 
\bibitem{BY81} Bouleau N. and Yor M., \textit{Sur la variation quadratique des temps locaux de certaines semimartingales}, C. R. Acad. Sci. Paris 292 (1981) 491–494.
\bibitem{CP11} Czarna I. and Palmowski Z., \textit{Ruin probability with Parisian delay for a spectrally
negative L\'evy risk process}, Appl. Probab. 48 984–1002. (2011)
\bibitem{IC16} Czarna I., \textit{Parisian ruin probability with a lower ultimate bankrupt barrier}, Scandinavian Actuarial Journal, Vol. 2016, No. 4, 319-337  (2016)
\bibitem{CPRY19} Czarna I., Pérez J.-L., Rolski T., Yamazaki K., \textit{Fluctuation theory for level-dependent Lévy risk processes}, Stochastic Processes and their Applications, doi.org/10.1016/j.spa.2019.03.006 (2019) 
\bibitem{CJPY97} Chesney M., Jeanblanc-Picqu\'e M., Yor M., \textit{Brownian excursions and
Parisian barrier options}, Adv. in Appl. Probab. 29 165–184. (1997)
\bibitem{F57} de Finetti B.,
\textit{Su un'impostazion alternativa dell teoria collecttiva del rischio}, Transactions of the XVth International Congress of Actuaries 2, 433-443 (1957)
\bibitem{GS04} Gerber H. U. and Shiu E. S. W., \textit{Optimal dividends: Analysis with Brownian motion}, North American Actuarial J. 8 1–20. (2004) 
\bibitem{GS06} Gerber H. U. and Shiu E. S. W.,
\textit{On Optimal Dividend Strategies in the Compound Poisson Model}, North American Actuarial Journal, Vol. 10 (2006)
\bibitem{HK11} Hubalek F. and Kyprianou E. \textit{Old and New Examples of Scale Functions for Spectrally Negative L\'evy Processes}, In Seminar on Stochastic Analysis, Random Fields and Applications VI, (R. Dalang, M. Dozzi and F. Russo, eds.). Progress
in Probability 63 119-145. Springer Basel. (2011)
\bibitem{JS95} Jeanblanc M. and Shiryaev A. N., \textit{Optimization of the flow of dividends}, Russian Math. Surveys 50 257–277 (1995) 
\bibitem{KDSR16} Khoshnevisan D. and Schilling R., \textit{From Lévy-Type Processes to Parabolic SPDEs}, Advanced Courses in Mathematics - CRM Barcelona, Birkh{\"a}user Basel (2016)
\bibitem{KKR12} Kuznetsov A., Kyprianou A.E., Rivero V. \textit{The Theory of Scale Functions for Spectrally Negative Lévy Processes}, In: L\'evy Matters II. Lecture Notes in Mathematics, Vol. 2061. Springer, Berlin, Heidelberg (2012)
\bibitem{K06} Kyprianou A. E., \textit{Introductory Lectures on Fluctuations of L\'evy Processes with Applications}, Springer, Berlin (2006).
\bibitem{KL10} Kyprianou A. E., and Loeffen R. L., \textit{Refracted L\'evy processes}, Annales de  l'Institut Henri Poincar\'e - Probabilit\'es et Statistiques 46, no. 1, 24-44 (2010)
\bibitem{KRS2008}  Kyprianou A.E., Rivero V. and Song R., \textit{Smoothness and convexity of scale functions
with applications to de Finetti’s control problem}, J. Theor. Probab. (2010) 23: 547.
\bibitem{LCR17} Lkabous M. A., Czarna I. and Renaud J.-F., \textit{Parisian ruin for a refracted L\'evy process}, Insurance: Mathematics and Economics, Vol. 47, 153-165 (2017)
\bibitem{L082} Loeffen R. L., \textit{On optimality of the barrier strategy in de Finetti’s dividend problem for spectrally negative L\'evy processes}, Ann. Appl. Probab.
Volume 18, Number 5, 1669-1680. (2008)
\bibitem{L08} Loeffen R. L., \textit{An optimal dividends problem with transaction costs for spectrally negative L\'evy processes}, Ann. Appl. Probab. Vol. 18, no. 5, 1669-1680 (2008)
\bibitem{LCP13} Loeffen R. L., Czarna I. and Palmowski Z., \textit{Parisian ruin probability for spectrally negative L\'evy processes}, Bernoulli 19(2), 599-609 (2013)
\bibitem{KNKY2016} Noba K. and Yano K., \textit{Generalized refracted L\'evy process and its application to exit problem}, Stochastic Processes and their Applications
Vol. 129, no. 5, p. 1697-1725 (2019)
\bibitem{protter} Protter P., \textit{Stochastic integration and differential equations}, 2nd ed., version 2.1, Springer, 2005.
\bibitem{Schilling98} Schilling R.L., \textit{Growth and H{\"o}lder conditions for the sample paths of a Feller process}, Probab. Theor. Relat. Fields 112, 565–611 (1998)
\bibitem{Schnurr2012} Schnurr A.,  \textit{On the semimartingale nature of Feller processes with killing}, Stoch. Proc. Appl. 122, 2758–2780 (2012)

 \end{thebibliography}
\end{document}